\newcommand{\lcm}{\mathrm{lcm}}
\newcommand{\Z}{\mathbb{Z}}
\newcommand{\ZZ}{\mathbb{Z}}
\newcommand{\RR}{\mathbb{R}}
\newcommand{\ord}{\mathrm{ord}}
\newcommand{\ORDER}{\mathcal{O}}
\newcommand{\ACDP}{{ACDP}}
\newcommand{\addresseshere}{%
  \enddoc@text\let\enddoc@text\relax
}
\newtheorem{lemma}{Lemma}[section]
\newtheorem{theorem}[lemma]{Theorem}
\newtheorem{proposition}[lemma]{Proposition}
\newtheorem{corollary}[lemma]{Corollary}
\theoremstyle{definition}
\newtheorem{definition}[lemma]{Definition}
\theoremstyle{remark}
\newtheorem{remark}{Remark}[section]
\newtheorem{example}{Example}[subsection]
\begin{document}

\title{Deterministic factoring with oracles}
\author{François  Morain \and Guénaël Renault \and Benjamin Smith}
\address[F.~Morain and B.~Smith]{
   LIX - Laboratoire d'informatique de l'\'Ecole polytechnique\\
   GRACE - Inria Saclay - Ile de France}
\email[F.~Morain]{morain@lix.polytechnique.fr}
\email[B.~Smith]{smith@lix.polytechnique.fr}
\address[G.~Renault]{
Agence Nationale de la S\'ecurit\'e des Syst\`emes d'Information\\
and 
LIX - Laboratoire d'informatique de l'École polytechnique,
    CNRS,
    Institut Polytechnique de Paris
    \emph{and}
    GRACE - Inria Saclay--Île-de-France}
\email[G.~Renault]{guenael.renault@ssi.gouv.fr}

\date{\today}

\maketitle

\begin{abstract}
    Can we factor an integer \(N\) 
    unconditionally, in deterministic polynomial time,
    given the value of its Euler totient \(\varphi(N)\)?
    We show that this can be done
    under certain size conditions on the prime factors of \(N\).
    The key technique is lattice basis reduction using the LLL algorithm.
    Among our results, we show that 
    if \(N\) has a prime factor \(p > \sqrt{N}\),
    then we can recover \(p\) 
    in deterministic polynomial time given \(\varphi(N)\).
    We also shed some light on 
    the analogous factorization problems given oracles for the sum-of-divisors function,
    Carmichael's function, and the order oracle 
    that is used in Shor's quantum factoring algorithm.
\end{abstract}

\section{
    Introduction
}

The \emph{fundamental theorem of arithmetic}
states that every positive integer $N$ can be written in a unique way,
up to permutation of the factors, as
$$N = \prod_{i=1}^k p_i^{e_i}$$
where the $p_i$ are distinct primes, and each $e_i > 0$.
Making this theorem explicit by
computing the prime factorization of \(N\)---that is,
computing the \(p_i\) and \(e_i\)---is a 
fundamental problem in algorithmic number theory.
This article is concerned with \emph{deterministic}
factorization algorithms.

Some numbers are easy to factor deterministically.
If $N$ is prime, then Miller~\cite{Miller75} proved that $N$
can be proven prime in deterministic polynomial time assuming the
Generalized Riemann Hypothesis (see also~\cite{Lenstra79}). The same
result was proven unconditionally in~\cite{AgKaSa04}.
In practice,
small numbers can be proven prime using a combination of
pseudoprimality tests.
For large numbers, several faster (though
heuristic) methods exist: see~\cite{CrPo05} for details.
Prime powers can be detected in quasi-linear time~\cite{BeLePi07}.

But when \(N\) has more than one prime factor, hard work is generally
required. In the quantum world, we can apply Shor's
algorithm~\cite{Shor97}.  In the classical world, the fastest
algorithms are non-deterministic: depending on the size of \(N\),
one may use Lenstra's ECM or the Number Field Sieve (NFS),
the best general-purpose factoring algorithm,
which runs in heuristic time
$\exp((\sqrt[3]{64/9}+o(1)) (\log N)^{1/3} (\log\log N)^{2/3})$ 
\cite{CrPo05}. 
This complexity explains the success of the RSA cryptosystem,
which is based on the supposed difficulty of factoring numbers with
only two prime factors.

Deterministic unconditional factoring methods are rare;
all such methods known have
exponential running time for general \(N\).
The first such method was due to Fermat,
followed by Lehman \cite{Lehman74};
Pollard's approach \cite{Pollard74}
has been built on by
recent methods
including Bostan--Gaudry--Schost~\cite{BoGaSc07},
Costa--Harvey~\cite{CoHa14}, and Hittmeir~\cite{Hittmeir18},
all in time $\tilde{O}(N^{1/4})$.
More recently, this complexity has been improved
to $\tilde{O}(N^{2/9})$ by Hittmeir~\cite{Hittmeir20},
and to $\tilde{O}(N^{1/5})$ by Harvey~\cite{Harvey20}
(see also~\cite{HaHi21} for a later speedup).
Better results exist for numbers known to have special forms:
for example,
\cite{BoDuHo99} describes a method to factor $N = p_1^r p_2$
that runs in polynomial time 
when \(p_1\) and \(p_2\) are of roughly the same size
and \(r\) is in \(\Omega(\log p_1)\).
This was extended in \cite{CoFaReZe16} to 
numbers $N = p_1^r p_2^s$ with $r$ and/or $s$ in $\Omega((\log p_1)^3)$.

The use of \emph{oracles}
allows us to abstract and encapsulate the availability of extra information
about the number \(N\).
It is thus a traditional way 
of trying to understand the difficulty of factoring.
In this work, we consider factoring algorithms
with access to one of the following oracles (defined formally
in~\S\ref{sec:oracle-defs}):
\begin{itemize}
    \item $\Phi$: on input $N$ returns $\varphi(N)$, the value of the
        Euler totient function;
    \item $\Lambda$: on input $N$ returns $\lambda(N)$, 
        the value of the Carmichael lambda function;
    \item $\ORDER$: on input $N$ and $a$ with \(\gcd(a,N) = 1\),
        returns the order of $a$ modulo~$N$;
    \item $\Sigma$: on input $N$ returns $\sigma(N)$, the sum of all
        positive divisors of $N$.
\end{itemize}
We study the conditions under which these oracles can be used to
factor $N$ deterministically,
unconditionally, and in a time complexity better than exponential,
in the spirit of \cite[Rem$23_{86}$]{AdMc94}.

The story of factoring with oracles
began with Miller~\cite{Miller75}, who proved the equivalence of
$\Phi$ and factoring under ERH. Long~\cite{Long81} proved
that factoring is randomly polynomially equivalent to computing orders.
Woll~\cite{Woll87} explored relationships between number-theoretic
problems including factorization and the \(\Phi\) and \(\ORDER\) oracles.
\'Zra\l{}ek~\cite{Zralek10} has shown that almost all integers $N$ can be
factored in deterministic polynomial time given $\varphi(N)$; also,
iterated calls to $\Phi$ allow deterministic factoring in
subexponential time, after using Landau's algorithm to
reduce to the squarefree case (see \S\ref{sct:sqf}).
This work was subsequently extended in~\cite{Zralek19}
(using methods tangential to ours).

In a different direction, Bach, Miller, and Shallit~\cite{BaMiSh86}
showed that $\Sigma$ allows efficient randomized factoring
(see~\S\ref{sec:randomized}).
Chow~\cite{Chow15} has studied factoring
with an oracle of a completely different
nature, using coefficients of modular forms;
this turns out to be very powerful, since it solves the integer
factorization problem.

There is also an important practical motivation for oracles in factoring.
In the context of RSA moduli $N=p_1p_2$, the
problem of factoring given additional information on \(p_1\) and
\(p_2\) has been studied since 1985.  
For example,
Rivest and Shamir showed in \cite{RiSh85}
that if $N$ has bitlength $n$ and the factors \(p_1\)
and \(p_2\) are balanced (with bitlengths close to $\frac{n}{2}$),
then \(N\) can be factored in polynomial time if we have access to an
oracle returning the $\frac{n}{3}$ most significant bits of~$p_1$.
Beyond their theoretical interest, these algorithms are motivated by
cryptographic hardware attacks: the oracle is an abstraction
representing side-channel analysis revealing some of the bits of the
secret factors.  In 1996, Coppersmith improved Rivest and Shamir's
results by applying lattice-based methods to the problem of finding
small integer roots of bivariate integer polynomials (what is now
called \emph{Coppersmith's method}~\cite{Coppersmith96a}). For
instance, knowing the $n/4$ most (or least) significant bits of~$p$
is enough to factor $N$ in polynomial time.
In the same cryptographic context, the
Coppersmith approach was used to prove that given a pair of RSA
exponents $(e, d)$ (with $d \equiv 1/e\bmod \varphi(N)$), one can
recover the two prime factors of $N$ in deterministic polynomial time~\cite{CoMa07}.

In this article we combine these approaches,
applying lattice-based techniques 
to factoring with number-theoretic oracles.
Our results rely on diophantine geometry, using classical continued
fractions and the LLL algorithm in a manner inspired by
the cryptographic work mentioned above.
Our results include the following:
\begin{theorem}\label{thm11}
    Assume $N$ is squarefree and has at least three prime factors, 
    of which the largest~\(p\) satisfies $p > \sqrt{N}$. 
    Then we can recover \(p\) in deterministic polynomial
    time in $\log(N)$ given one of 
    $\varphi(N)$, $\lambda(N)$, or $\sigma(N)$.
\end{theorem}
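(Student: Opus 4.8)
The plan is to convert the value of $\varphi(N)$ into an instance of ``find a divisor of a known integer lying in a short interval'', and then to invoke Coppersmith's LLL-based method. Write $N = pM$ with $M := N/p$. Since $p > \sqrt N$ and $N$ is squarefree, $M$ is a squarefree integer with $1 < M < \sqrt N$ (composite, by the hypothesis on the number of prime factors), and $p$ occurs to the first power in $N$, so $\varphi(N) = (p-1)\varphi(M)$. The elementary identity
$$ A \;:=\; \varphi(N) + \varphi(M) \;=\; p\,\varphi(M) $$
then shows that $A$ is a multiple of the unknown prime $p$; equivalently, $p$ is an approximate common divisor of $N$ and of $N - \varphi(N) = p(M-\varphi(M)) + \varphi(M)$. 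Moreover $A$ divides the \emph{known} integer $L := N\varphi(N) = p(p-1)M\varphi(M)$, with cofactor $(p-1)M$, and since $0 < \varphi(M) < M < \sqrt N$ the integer $A$ lies in the interval $I := \bigl(\varphi(N),\,\varphi(N)+\sqrt N\bigr)$, of length $<\sqrt N$. The task is thus reduced to finding the divisors of $L$ that lie in $I$.

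For this I would use Coppersmith's method \cite{Coppersmith96}: given $L$ and a target $\tilde d$, all divisors $d$ of $L$ with $d \ge L^{\beta}$ and $|d - \tilde d| \le \tfrac12 L^{\beta^{2}}$ can be found in time polynomial in $\log L$. Here $A \ge \varphi(N)$, and the classical bound $\varphi(N) \gg N/\log\log N$ together with $L \le N^{2}$ lets us take $\beta = \tfrac12 - O\!\bigl(\tfrac{\log\log\log N}{\log N}\bigr)$, for which $L^{\beta^{2}} \ge \sqrt N/(\log\log N)^{O(1)}$. Covering $I$ by $(\log\log N)^{O(1)}$ intervals of this length and running the algorithm on each produces, in polynomial time, a polynomial-length list of divisors of $L$ in $I$ that includes the true $A$. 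For each candidate $A'$ in the list set $k := A' - \varphi(N)$ and keep $A'$ only when $0 < k < \sqrt N$, $k \mid \varphi(N)$, and $p' := \varphi(N)/k + 1$ is a prime exceeding $\sqrt N$ that divides $N$; then output $p'$. The true $A$ passes these tests and gives $p' = p$, while any $p'$ passing them is a prime factor of $N$ larger than $\sqrt N$, of which there is exactly one; hence the output is exactly $p$. An analogous reduction handles $\lambda(N)$: since $(p-1)\mid\lambda(N)$, the integer $A := \lambda(N) + \lambda(N)/(p-1) = p\cdot\lambda(N)/(p-1)$ divides $L := N\lambda(N)$ and lies in an interval of length $<\sqrt N$ about $\lambda(N)$.

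The step I expect to be the main obstacle is the size bookkeeping in the Coppersmith step: the interval $I$ has length essentially equal to the largest displacement the method tolerates for a divisor of $L$ of size $\approx \sqrt L$, so we sit exactly on the boundary of what LLL provides. One has to check carefully that the logarithmic losses leave only polynomially many (indeed, $(\log\log N)^{O(1)}$) subintervals; this is precisely where the lower bound on $\varphi(N)$ is used, and it is the delicate point in the $\lambda(N)$ case, where $\lambda(N)$ can be far smaller than $N$, so that a crude reuse of the same estimate would leave a subexponential rather than polynomial number of subintervals and a finer argument is required.
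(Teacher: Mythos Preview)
Your argument for the \(\varphi(N)\) case is correct, and it takes a genuinely different route from the paper. The paper proves this via \emph{bivariate} Coppersmith (its Theorem~\ref{key} and Corollary~\ref{cor:key}): writing \(N = x_0 p\) and \(\varphi(N) = y_0(p-1)\), one eliminates \(p\) and searches for the small root \((x_0,y_0) = (N/p,\varphi(N)/(p-1))\). You instead use \emph{univariate} Coppersmith on the product \(L = N\varphi(N)\), exploiting that \(A = p\,\varphi(N/p)\) is a divisor of \(L\) sitting within \(\sqrt{N}\) of the known point \(\varphi(N)\). Your passage to \(L\) is exactly what makes univariate Coppersmith reach the threshold \(p>\sqrt{N}\): applied to \(N\) alone (as in the paper's Theorem~\ref{key2}/Corollary~\ref{cor:key2}) the univariate method only yields \(p > N^{(\sqrt{5}-1)/2}\). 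So each approach buys something: the paper's bivariate argument is a single uniform statement covering both \(\varphi\) and \(\lambda\), while yours stays within the simpler univariate framework at the cost of an extra size analysis.

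Where you do leave a gap is the \(\lambda(N)\) case, and the ``finer argument'' you allude to is actually short. Your worry is that when \(\lambda(N)=N^{\theta}\) is small, \(\beta=\theta/(1+\theta)\) drops and the Coppersmith tolerance \(L^{\beta^{2}} = N^{\theta^{2}/(1+\theta)}\) shrinks. But the search interval shrinks in lockstep: since \(p>\sqrt{N}\) one has
\[
A-\lambda(N)=\frac{\lambda(N)}{p-1} \;<\; \frac{\lambda(N)}{\sqrt{N}-1}\;\approx\; N^{\theta-1/2},
\]
not merely \(\sqrt{N}\). A one--line check shows \(\theta - \tfrac12 \le \theta^{2}/(1+\theta)\) for all \(\theta\le 1\), so the interval already fits inside a single Coppersmith window and no subdivision is needed. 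By contrast, the paper's bivariate bound \(\alpha_1 > (1+\theta)/4\) makes the same phenomenon visible directly: smaller \(\theta\) only \emph{weakens} the hypothesis, so \(\alpha_1>1/2\) suffices uniformly.
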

\begin{proof}
    See Theorem~\ref{propcor:key}.
\end{proof}

\begin{theorem}
    Assume \(N\) is squarefree and has exactly three prime factors
    $p_1 > p_2 > p_3$. Put $\alpha_i = \log p_i/\log N$.
    Then we can compute a nontrivial factor of \(N\)
    in deterministic polynomial time in \(\log(N)\)
    given \(\varphi(N)\) or \(\sigma(N)\)
    if \emph{at least one} of the following conditions hold:
    \begin{enumerate}
        \item
            \(\alpha_1 > 1/2\); or
        \item
            \(2 \alpha_1 + 3 \alpha_2 \geq 2\); or
        \item
            \(\alpha_2 > (-1 + \sqrt{17})/8\).
    \end{enumerate}
\end{theorem}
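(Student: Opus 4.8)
The plan is to reduce all three cases to recovering a divisor of $N$ from a sufficiently good approximation supplied by the oracle. I will treat $\varphi(N)$ explicitly; the statements for $\lambda(N)$ should follow analogously (see \S\ref{sec:oracle-lambda}). Write $p_i = N^{\alpha_i}$, so $\alpha_1+\alpha_2+\alpha_3 = 1$. Case (1) requires no new work: $\alpha_1 > 1/2$ is exactly the hypothesis $p_1 > \sqrt N$ of Theorem~\ref{thm11}, so I would simply invoke Theorem~\ref{propcor:key}. For cases (2) and (3) I would set $s := N - \varphi(N) - 1$; expanding $\varphi(N) = (p_1-1)(p_2-1)(p_3-1)$ and substituting $p_1p_2 = N/p_3$ gives the key identity
\[
    s \;=\; \frac{N}{p_3} + \delta, \qquad \delta \;=\; (p_1+p_2)(p_3-1) - p_3 ,
\]
with $0 < \delta < (p_1+p_2)p_3 < 2p_1p_3$. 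Thus $s$ approximates the divisor $d := N/p_3 = N^{\alpha_1+\alpha_2}$ of $N$ with additive error $\delta \le 2N^{\alpha_1+\alpha_3}$; in other words $(N,s)$ is an instance of \PACDP\ with hidden divisor $d$, and everything comes down to checking that this error is within reach of our two solvers under (2) and (3).

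For case (2), the relation $\alpha_1+\alpha_2+\alpha_3=1$ rewrites $2\alpha_1+3\alpha_2 \ge 2$ as $\alpha_2 \ge 2\alpha_3$. Then $N/s$ approximates the cofactor $p_3 = N/d$ with error $N\delta/(ds) \asymp p_3\,\delta/d < 2p_3^2/p_2 = 2N^{2\alpha_3-\alpha_2+o(1)}$, which is below any fixed power of $N$ as soon as $\alpha_2 > 2\alpha_3$; a continued-fraction argument --- \SplitCF\ on $(N,s)$, extracting $p_3$ as the denominator of a convergent of $N/s$ --- then recovers $p_3$ and hence a factor, and it still goes through at the boundary $\alpha_2 = 2\alpha_3$ because there the error is $O(1/p_3^2)$.

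For case (3) I would call the LLL-based solver \SplitLLL\ instead: on input $(N,s,\beta)$ it returns every divisor $d'\ge N^\beta$ of $N$ with $|d'-s|\le N^{\beta^2-\epsilon}$, and I would grid-search over the $O(\log N)$ relevant values of $\beta$ (one knows $\beta = 1-\alpha_3 \in (2/3,1)$). With $\beta = \alpha_1+\alpha_2$, success demands $\alpha_1+\alpha_3 < (\alpha_1+\alpha_2)^2$, i.e. $1-\alpha_2 < (\alpha_1+\alpha_2)^2$; since the right-hand side increases with $\alpha_1$ and, under (3), the infimal admissible value of $\alpha_1$ is $\alpha_2$ itself, it is enough that $1-\alpha_2 < 4\alpha_2^2$, i.e. $4\alpha_2^2+\alpha_2-1>0$, i.e. $\alpha_2 > (-1+\sqrt{17})/8$ --- precisely hypothesis (3). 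Then \SplitLLL\ returns $d = N/p_3$.

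The main obstacle, as usual in this circle of arguments, is not the ideas but the size bookkeeping at the two boundaries --- the non-strict inequality in (2) and the passage to the infimum over admissible $(\alpha_1,\alpha_3)$ in (3) --- where one must be careful that the constants and the $\epsilon$ in the \SplitLLL\ threshold get absorbed (by taking $N$, resp. the lattice dimension, large enough). Once \SplitCF\ and \SplitLLL\ and their error thresholds are in hand, the only genuinely new ingredient here is the identity $s = N/p_3 + \delta$ together with the observation that $\delta$ is of the stated size.
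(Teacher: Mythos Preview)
Your proposal is correct and follows essentially the same route as the paper: case~(1) is exactly Theorem~\ref{propcor:key}, and cases~(2) and~(3) are the $r=2$ instances of Theorems~\ref{thm52} and~\ref{th:PACDL-theoretical}/\ref{th:PACDL-practical} respectively, i.e.\ the same \PACDP{} instance solved by \PACDCF{} and \PACDL{}. The only cosmetic differences are that you feed $s=N-\varphi(N)-1$ rather than $\varphi(N)$ itself into the solver (equivalent, since $s\equiv -\varphi(N)-1\pmod D$ for any $D\mid N$, and the resulting error exponent $1-\alpha_2$ is identical), and that for case~(3) you propose a grid search over the divisor exponent whereas the paper supplies the single explicit pair $(\alpha,\beta)=(2\overline{\alpha}_2,1-\overline{\alpha}_2)$ via Lemma~\ref{lemma:alpha-r}; both are valid and polynomial-time.
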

\begin{proof}
    Follows from Theorems~\ref{thm52},
    \ref{th:ACDL-theoretical},
    \ref{th:ACDL-practical},
    and~\ref{propcor:key}.
\end{proof}

We define the oracles,
and recall some associated number-theoretic results,
in~\S\ref{sec:oracles},
before
stating the relevant results of Coppersmith and Howgrave-Graham
in~\S\ref{sct-tools}.
Our core results in~\S\ref{sct-prep}
solve (generalizations of) the following problem:
given \(N\) and \(M\) such that 
there exists a (large enough) prime~\(p\) with $p \mid N$ and $p\pm 1 \mid M$,
recover~$p$ in deterministic polynomial time.
We apply these algorithms to factoring 
with \(\Phi\), \(\Lambda\), and \(\Sigma\) in~\S\ref{sct-Phi},
and with \(\ORDER\) and other oracles in~\S\ref{sec:other}.

\section{
    Number-theoretic oracles
}
\label{sec:oracles}

As above, suppose
$
    N = \prod_{i=1}^k p_i^{e_i}
$,
where the $p_i$ are distinct primes and $e_i > 0$.
Let \(\omega(N)\) denote the number of prime divisors of~\(N\)
(so \(\omega(N) = k\) above).
Recall that \(\omega(N)\) is trivially bounded above by $(\log N)/(\log 2)$,
and is of order $\log \log N$ on average. 

\subsection{The oracles}
\label{sec:oracle-defs}

\begin{definition}[The \texorpdfstring{\(\Phi\)}{Phi} oracle]
    \label{def:oracle-phi}
    Given \(N\) as above,
    the oracle \(\Phi\) returns the value of
    the Euler totient function
    $$
        \varphi(N) = \prod_{i=1}^{\omega(N)} p_i^{e_i-1} (p_i-1)
        \,,
    $$
    which counts the number of integers in $\{1, \ldots, N-1\}$
    that are prime to $N$;
    that is, \(\varphi(N)\) is the cardinality of the multiplicative group~\((\ZZ/N\ZZ)^\times\).
\end{definition}

\begin{definition}[{The \texorpdfstring{\(\Lambda\)}{Lambda} oracle}]
    \label{def:oracle-lambda}
    Given \(N\) as above,
    the oracle \(\Lambda\) returns the value of 
    Carmichael's \(\lambda\) function
    \[
        \lambda(N) = \lcm_{i=1}^{\omega(N)} \lambda(p_i^{e_i})
        \quad
        \text{where}
        \quad
        \lambda(p_i^{e_i})
        =
        \begin{cases}
            1 & \text{if } p_i = 2 \text{ and } e_i = 1,
            \\
            2 & \text{if } p_i = 2 \text{ and } e_i = 2,
            \\
            \varphi(2^{e_i})/2 & \text{if } p_i = 2 \text{ and } e_i > 2.
            \\
            \varphi(p_i^{e_i}) & \text{if } p_i > 2
            \,.
        \end{cases}
    \]
    This is
    the exponent of \((\ZZ/N\ZZ)^\times\):
    that is, the maximal multiplicative order of an element modulo~\(N\).
\end{definition}

\begin{definition}[The \texorpdfstring{\(\ORDER\)}{order} oracle]
    \label{def:oracle-order}
    Given \(N\) as above and \(a\) with \(\gcd(N,a) = 1\),
    the oracle \(\ORDER\) returns the order
    \[
        \ord_N(a) := \min \{ r : r \in \ZZ_{>0} \mid a^r \equiv 1 \pmod{N} \}
        \,.
    \]
\end{definition}

Shor's quantum factorization algorithm
uses the Quantum Fourier Transform
to construct a quantum polynomial-time order-finding algorithm,
which yields an efficient factorization algorithm
after some classical post-processing 
(similar to the process in \S\ref{sec:randomized} below).
This order-finding algorithm
is not a true realization of \(\ORDER\),
since it is only guaranteed to return a \emph{divisor} of \(\ord_N(a)\),
but for most inputs it returns the true order
with very high probability.
Factoring with \(\ORDER\)
therefore gives us valuable intuition into Shor-style 
quantum factoring algorithms.

\begin{definition}[{The \texorpdfstring{\(\Sigma\)}{sigma} oracle}]
    \label{def:oracle-sigma}
    Given \(N\) as above,
    the oracle \(\Sigma\) returns the sum of the divisors of \(N\):
    that is,
    $$
        \sigma(N) 
        := 
        \sum_{d\mid N} d = \prod_{i=1}^{\omega(N)} \frac{p_i^{e_i+1}-1}{p_i-1}
        \,.
    $$
\end{definition}

\subsection{Relationships between \texorpdfstring{$\Phi$, $\Lambda$, and $\ORDER$}{the oracles}}
\label{sec:relationships}

Lagrange's theorem tells us that the order of an element divides the
order, and indeed the exponent, of the group.
Applying this to \((\ZZ/N\ZZ)^\times\)
gives
\[
    \ord_N(a) \mid \lambda(N)
    \quad
    \text{and}
    \quad
    \lambda(N) \mid \varphi(N)
\]
for all \(N\) and all \(a\) prime to \(N\).

While the \(\varphi\) and \(\lambda\) functions may seem very close, 
it is easy to see that $\varphi(N)/\lambda(N)$ can be made quite large.
For example, if \(N = p_1p_2\) where $p_1-1 = 2(p_2-1)$,
then $\varphi(N)/\lambda(N) = p_2-1 = \Omega(\sqrt{N})$.

Recall that if \(p\) is a prime,
then the valuation \(\nu_p(x)\) of an integer \(x\) at \(p\)
is the maximal~\(e\) such that \(p^e\mid x\).
If \(N\) is odd,
then \(\nu_2(\varphi(N)) = \sum_{i=1}^{\omega(N)}\nu_2(p_i-1) \ge \omega(N)\)
is an easy upper bound for \(\omega(N)\),
which may be useful when we have access to \(\Phi\)
(though this bound is generally far from tight).
In contrast,
\(\nu_2(\lambda(N)) = \max_{i=1}^{\omega(N)}\nu_2(p_i-1)\) 
gives us no information about \(\omega(N)\) 
on its own---and so neither does \(\nu_2(\ord_N(a))\) for any \(a\).

\subsection{Randomized and conditional algorithms}
\label{sec:randomized}

All of these oracles give efficient \emph{randomized} factoring algorithms
(see~\cite{BaMiSh86}). When $N$ is composite, $\varphi(N)$ and
$\lambda(N)$ are even,
which enables us to find some $c \neq \pm 1$ in \(\ZZ/N\ZZ\) 
such that $c^2 \equiv 1 \pmod N$, and then
$\gcd(c-1, N)$ is a nontrivial factor of~\(N\).
See Appendix~\ref{sec:appendix} for the corresponding algorithms. For $\Sigma$, we
refer to \cite{BaMiSh86} again.

    Folklore tells us that there is a randomized polynomial-time reduction
    between computing square roots modulo \(N\) and factoring \(N\).
    Rabin gives a precise analysis when \(N\) is a product of two primes
    in~\cite[Theorem 1]{Rabin79}. 
    To render this approach deterministic
    (as in~\cite{Miller75})
    one needs a bound on non-quadratic residues,
    but this bound is currently only known to hold under ERH.

\section{
    Lattices, Coppersmith's method, and approximate common divisors
}
\label{sct-tools}

In this section 
we recall some essential results
on our two basic tools:
Coppersmith's method for finding small roots of polynomials,
and Howgrave-Graham's approximate common divisors.
We also introduce some elementary subroutines
that we will use to improve the quality of our factorizations.

The Lenstra--Lenstra--Lov\'asz lattice basis reduction algorithm
(LLL)~\cite{LeLeLo82}
is at the heart of both Coppersmith's and Howgrave-Graham's
methods.
Recall that if \(L\) is a lattice of dimension \(n\) in~\(\RR^n\)
(with the Euclidean norm \(\Vert\cdot\Vert\)), then LLL
produces a basis $(b_1, b_2, \ldots, b_n)$ of \(L\) satisfying
(among other conditions)
\[
    \Vert b_1\Vert \leq 2^{(n-1)/4} \mathrm{det}(L)^{1/n}
    \,.
\]
The LLL algorithm computes an LLL-reduced basis for $L$
in polynomial time in \(n\), and in $\log B$ 
where $B$ is a bound on all $\Vert b_i\Vert^2$.
The resulting \(b_1\) 
is approximately as short as possible:
\(\Vert b_1\Vert \le 2^{(n-1)/2}\min_{v\in L\setminus\{0\}}\Vert
v\Vert\). Note that all our lattices will have integer coefficients.

Many variants of LLL have been designed for more speed
and accuracy (e.g. \cite{SchnorrEuchner94,NguyenStehle09,NovocinSV11}),
but the original LLL algorithm suffices for our results.

\subsection{Bivariate Coppersmith}

Theorem~\ref{cop} describes the input and output of Coppersmith's method
for finding small zeroes of integer bivariate
polynomials~\cite{Coppersmith96a,Coppersmith97}.
Coppersmith's algorithm is clarified in~\cite{Coron04} 
and~\cite{BlMa05},
and extended to the general multivariate case 
in~\cite{Jutla98}, \cite{Coppersmith01}, \cite{BlMa05}, 
and~\cite{Ritzenhofen2010}.

\begin{theorem}\label{cop}
    Let $f(x, y) = \sum p_{i, j} x^i y^j\in \Z[x, y]$ be irreducible, 
    of degree at most $\delta$ in \(x\) and \(y\),
    and suppose $f(x_0,y_0) = 0$ 
    for some $|x_0| < X$, $|y_0| < Y$. 
    If 
    \[
        X Y < \mathcal{W}^{2/(3\delta)}
        \quad
        \text{where}
        \quad
        \mathcal{W} = \Vert f(x X, y Y)\Vert^* 
        := \max_{i, j} |p_{i, j}| X^i Y^j
        \ ,
    \]
    then we can find all such solutions $(x_0,y_0)$ 
    in deterministic polynomial time in \(\log \mathcal{W}\) and \(\delta\).
\end{theorem}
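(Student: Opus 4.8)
The plan is to follow the now-standard Coppersmith–Howgrave-Graham strategy: reduce the problem of finding an \emph{integer} root of the single bivariate polynomial \(f\) to finding a \emph{second} polynomial \(g\in\Z[x,y]\), algebraically independent from \(f\), that vanishes at every small root \((x_0,y_0)\); then take the resultant \(\mathrm{Res}_y(f,g)\), which is a nonzero univariate polynomial in \(x\) (nonzero precisely because \(f\) is irreducible and \(g\) is not a multiple of it), and extract its integer roots by classical means, recovering \(y_0\) by back-substitution. So the whole theorem rests on producing that auxiliary \(g\).

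To produce \(g\), first I would fix an integer parameter \(m\) (to be optimized) and form the family of \emph{shifted products}
\[
    g_{i,j,k}(x,y) = x^i y^j f(x,y)^k
\]
for suitable ranges of \(i,j,k\) with \(0\le k\le m\). Any common integer zero \((x_0,y_0)\) of \(f\) is a zero of every \(g_{i,j,k}\), and moreover \(f(x_0,y_0)^k=0\) gives these polynomials a ``modular'' structure of the kind Howgrave-Graham's lemma exploits. Next I would build the lattice \(L\) spanned by the coefficient vectors of the rescaled polynomials \(g_{i,j,k}(xX,yY)\), arranged so that the generating matrix is triangular; its determinant is then an explicit product of powers of \(X\), \(Y\), and the coefficients of \(f\) (hence of \(\mathcal W\)). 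Running LLL on \(L\) yields a short vector whose norm is bounded, via the LLL bound recalled above, by roughly \(2^{O(\dim L)}\det(L)^{1/\dim L}\); this short vector is the coefficient vector of some integer-linear combination \(g(xX,yY)\) of the \(g_{i,j,k}(xX,yY)\). By Howgrave-Graham's criterion, if \(\Vert g(xX,yY)\Vert\) is small enough (smaller than \(\mathcal W^{m}/\sqrt{\dim L}\), say) relative to the modulus forced by the \(f^m\) structure, then \(g(x_0,y_0)=0\) holds over \(\Z\), not merely modulo something — so \(g\) is the polynomial we want.

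The heart of the argument — and the step I expect to be the main obstacle — is the bookkeeping that turns ``LLL's short vector is small enough'' into the clean hypothesis \(XY<\mathcal W^{2/(3\delta)}\). One must choose the shift ranges for \(i,j,k\) so that \(\dim L\) and the exponents of \(X,Y,\mathcal W\) in \(\det(L)\) balance; carrying this out and letting \(m\to\infty\) is exactly where the constant \(2/(3\delta)\) and the extra factor of \(3\) in the denominator come from, and it is where Coppersmith's original construction, Coron's reworking, and the later clarifications differ in detail. A secondary subtlety is ensuring algebraic independence: one must check that the combination \(g\) produced by LLL is genuinely not a polynomial multiple of \(f\) (otherwise the resultant vanishes identically and we learn nothing); this is handled by noting that a multiple of \(f\) lying in the span of our shifts would be forced to have norm too large to be the LLL output, so the short vector cannot be such a multiple. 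Once \(g\) is in hand, the resultant-and-root-finding endgame is routine, and the total running time is polynomial in \(\dim L\) — hence in \(\delta\) — and in \(\log\mathcal W\), since every arithmetic operation (LLL, resultant, univariate root-finding) is on integers of size polynomial in those parameters. I would cite Coron~\cite{Coron07} for the precise form of the lattice and the verification of the determinant bound rather than reproducing the estimates here.
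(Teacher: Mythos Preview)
The paper does not actually prove Theorem~\ref{cop}: it states the result and refers the reader to Coron~\cite{Coron07} for the argument. So in that narrow sense your proposal already goes further than the paper does, and your closing line deferring to~\cite{Coron07} matches exactly what the paper does.

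That said, the sketch itself blurs the modular univariate picture with the integer bivariate one in a way that would not survive a careful write-up. You build the lattice from shifts \(x^iy^jf(x,y)^k\) and then invoke Howgrave--Graham's lemma with a ``modulus forced by the \(f^m\) structure,'' bounding the short vector by \(\mathcal{W}^m/\sqrt{\dim L}\). But here \(f(x_0,y_0)=0\) over~\(\Z\), not modulo anything, so there is no modulus to apply Howgrave--Graham against; and every \(\Z\)-linear combination of your \(g_{i,j,k}\) with \(k\ge 1\) is already a polynomial multiple of \(f\), which is precisely what you must avoid for the resultant step. (The \(k=0\) shifts \(x^iy^j\) do not vanish at \((x_0,y_0)\), so including them breaks the ``every lattice point vanishes there'' claim.) The genuine difficulty in the bivariate \emph{integer} case is producing a non-multiple of \(f\) that nonetheless vanishes at the small root, and this is exactly where Coppersmith's original argument, Coron's 2004 version (which manufactures an auxiliary modulus \(n\) so that Howgrave--Graham \emph{does} apply), and Coron's 2007 direct approach all diverge from the modular template you describe. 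Your final citation papers over this, but the preceding sketch does not reflect the mechanism in~\cite{Coron07}.
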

\begin{proof}
    See Coron's treatment in~\cite{Coron07}.
\end{proof}

In this article we will apply the special case of
Theorem~\ref{cop} where the polynomial \(f\) is linear in each variable
to find divisors of $N$. 
In another direction, but using the same techniques,
Theorem~\ref{theorem:co-ho-na} improves on a result of
Lenstra~\cite{Lenstra84}.
\begin{theorem}[Coppersmith--Howgrave-Graham--Nagaraj~\cite{CoHoNa08}]
    \label{theorem:co-ho-na}
    Let $0 \leq r < s < N$ with $\gcd(r, s)=1$
    and $s \geq N^{\alpha}$ for some $\alpha > 1/4$.
    The number of divisors of $N$ 
    that are congruent to $r \pmod s$ is in
    $O((\alpha-1/4)^{-3/2})$. The divisors can be found in
    deterministic polynomial time.
\end{theorem}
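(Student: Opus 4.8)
The plan is to derive this as a direct application of the bivariate Coppersmith result, Theorem~\ref{cop}, specialized to a polynomial that is linear in each variable. Write a putative divisor of $N$ congruent to $r$ modulo $s$ as $d = r + s x_0$ for some integer $x_0 \ge 0$; since $d \mid N$ we have $N = d \cdot y_0$ for a positive integer $y_0$, and we want to find all valid pairs $(x_0, y_0)$. This suggests working with the bivariate polynomial
\[
    f(x, y) = (r + s x) y - N \,,
\]
which vanishes at $(x_0, y_0)$, has degree $\delta = 1$ in each variable, and is irreducible over $\Z[x,y]$ (it is linear in $y$ with content $1$, since $\gcd(r,s)=1$). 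First I would set up the size bounds: if $d$ ranges over divisors in a dyadic interval, say $d \in [D, 2D)$, then $x_0 < X := 2D/s$ and $y_0 < Y := N/D$, so $XY < 2N/s \le 2 N^{1-\alpha}$.

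Next I would estimate $\mathcal{W} = \Vert f(xX, yY)\Vert$. The monomials of $f(xX,yY)$ have coefficients $sXY$ (from $sxy$), $rY$ (from $ry$), and $-N$; the dominant term is $-N$, so $\mathcal{W} \geq N$, and in fact $\mathcal{W} = \Theta(N)$ up to a small constant factor (bounding $sXY \le 2N$ and $rY < N$). Theorem~\ref{cop} then applies whenever $XY < \mathcal{W}^{2/(3\delta)} = \mathcal{W}^{2/3}$; since $XY = O(N^{1-\alpha})$ and $\mathcal{W}^{2/3} = \Theta(N^{2/3})$, the condition is met as soon as $1 - \alpha < 2/3$, i.e. $\alpha > 1/3$. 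This gives the result for $\alpha > 1/3$ immediately, and recovers all divisors in the dyadic block in deterministic polynomial time; running over the $O(\log N)$ dyadic blocks covers all divisors. The main obstacle — and the part that will require genuine care — is pushing the threshold down from $\alpha > 1/3$ to $\alpha > 1/4$, and simultaneously obtaining the counting bound $O((\alpha - 1/4)^{-3/2})$ on the number of such divisors.

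To get the sharper threshold I would not use the black-box statement of Theorem~\ref{cop} but rather open up its proof: build the Coppersmith lattice for $f$ directly with a multiplicity parameter $m$ and extra shift polynomials, and optimize. The standard analysis for linear bivariate $f$ produces, after LLL reduction, a polynomial $h$ with a common zero at $(x_0,y_0)$ over $\Z$ whenever $XY < \mathcal{W}^{2/(3\delta) - \varepsilon(m)}$ with $\varepsilon(m) \to 0$; more precisely the condition becomes roughly $XY \cdot (\text{something})^{O(1/m)} < N$, which in the linear case can be tuned so that the true obstruction is $XY < N^{1 - o(1)}$ corrected by the lattice dimension — this is exactly where the $1/4$ (rather than $1/3$) arises once one accounts correctly for the $-N$ term dominating $\mathcal{W}$ and chooses the shift structure asymmetrically in $x$ and $y$. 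For the counting bound: each output polynomial $h$ from a fixed lattice has degree $O(m)$ with $m = O((\alpha - 1/4)^{-1})$ chosen to make the determinant condition hold with slack proportional to $\alpha - 1/4$; the real zeros $(x_0, y_0)$ all lie on the curve $f = 0$ and also on $h = 0$, and two such curves (with $f$ irreducible and $h$ not a multiple of $f$) meet in $O(\deg h) = O(m)$ points by Bézout, so the divisor count is $O(m) = O((\alpha-1/4)^{-1})$ per block — one then has to be slightly more careful, summing a geometric-type bound over blocks and tracking the exponent, to land on the stated $O((\alpha - 1/4)^{-3/2})$ rather than a weaker power. I expect the bookkeeping in this optimization — choosing $m$, the shift set, and verifying the determinant inequality with the right dependence on $\alpha - 1/4$ — to be the technical heart of the argument, and I would lean on the explicit computations in~\cite{CoHoNa08} and Coron's formulation~\cite{Coron07} to carry it through cleanly.
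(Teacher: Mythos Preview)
The paper does not prove this theorem: it is quoted from~\cite{CoHoNa08} and used only as a black box (once, at the end of~\S\ref{sct-order}). There is therefore no proof in the paper for your proposal to be compared against.

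As a sketch of the argument in the cited source, your outline is broadly on target but somewhat loose in the crucial part. Your computation that the black-box Theorem~\ref{cop} applied to $f(x,y)=(r+sx)y-N$ yields only the threshold $\alpha>1/3$ is correct, and you are right that reaching $\alpha>1/4$ requires opening up the lattice construction rather than invoking the generic statement. However, the mechanism by which $1/4$ arises in~\cite{CoHoNa08} is not quite the ``asymmetric shifts plus let $m\to\infty$'' picture you describe for the standard Coppersmith analysis; the authors work with a specific small-dimensional lattice (building on Lenstra's original $2\times 2$ construction) and a direct geometry-of-numbers argument, and the exponent $3/2$ in the divisor count comes from that explicit analysis rather than from a Bézout bound summed over dyadic blocks. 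Your plan would likely recover \emph{some} polynomial bound above $1/4$, but to land exactly on $O((\alpha-1/4)^{-3/2})$ you would need to follow the construction in~\cite{CoHoNa08} rather than the generic Coppersmith--Coron framework.
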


\subsection{Approximate common divisors}
\label{sct-agcd}

One of the first applications of Coppersmith's method
was to attack RSA moduli,
factoring \(N = p_1p_2\) in polynomial time
given half of the bits of $p_1$.
The algorithmic presentation of these theorems used today
is due to Howgrave-Graham~\cite{Howgrave-Graham97},
who later used this result to solve the \emph{Approximate Common
Divisor Problem} (ACDP)~\cite{Howgrave-Graham01},
which we formalize in Definition~\ref{def:ACDP}.

\begin{definition}[ACDP]
    \label{def:ACDP}
    Given integers $A$ and $B$, and bounds $X$ and $D_0$ 
    for which there exists at least one $(x,D)$ with $|x| \leq X$
    and
    \(D > D_0\) such that \(D \mid B\) and $D \mid (A+x)$,
    the \ACDP{} is to find all such $(x,D)$.
\end{definition}
Before going further, we must make the following very important
observation (not present in~\cite{Howgrave-Graham01}).
\begin{remark}\label{sqf-A-B}
    If $(x,D)$ is an \ACDP{} solution for \((A,B,X,D_0)\),
    then so is $(x,D/z)$ for any divisor $z$ of $D$ 
    such that $D/z > D_0$.
\end{remark}

Howgrave-Graham gives two types of algorithms for solving \ACDP{}
instances in~\cite{Howgrave-Graham01}.
The first, using continued fractions, is described by 
Proposition~\ref{prop:CF} and Algorithm~\ref{alg:ACDCF} (\ACDCF).
The second approach, using LLL, is described by Theorem~\ref{thm:ACD_L}
and Algorithm~\ref{alg:ACDL} (\ACDL). 
Both algorithms run
in deterministic polynomial time, unlike the algorithms for the
\emph{Generalized} Approximate Common Divisor problem (GACDL) 
also considered in~\cite{Howgrave-Graham01}.

As noted in~\cite{Howgrave-Graham01}, 
the continued fraction (\ACDCF) and lattice (\ACDL) approaches 
are subtly different:
\ACDCF requires only a lower bound on one exponent $\alpha$,
but \(\ACDL\) requires some relation between two exponents,
$\alpha$ and $\beta$ (or $\epsilon$).
We will encounter this difference in~\S\ref{varphikgt2}.
Similar phenomena appear in the context of
\emph{implicit factorization}
(e.g. \cite{MaRi09,SaMa09,FaMaRe10,SaMa11}),
but in these cases the two exponents can be handled more easily.

\subsection{Computing approximate common divisors via continued fractions}

The following is taken from \cite{Howgrave-Graham01}.
We include the proof here,
because we will need to be precise about what the algorithm actually
outputs.

\begin{proposition}[Howgrave-Graham]\label{HG_CF}
    Given integers \(A < B\), and real $\alpha > 1/2$, we can find all
    integers $|x_0| < X = B^{2\alpha-1}/2$ such that there exists $D >
    B^{\alpha}$ dividing both $A+x_0$ and $B$, or decide that no such
    $x_0$ exists, in deterministic polynomial time in \(\log B\).
\end{proposition}
\begin{proof}
    Suppose \((x, D)\) is one of the desired \ACDP{} solutions:
    then 
    \(D \mid B\) and \(D \mid (A+x)\),
    with \(D > B^\alpha\) and \(|x| \le B^{2\alpha-1}/2\).
    Write
    $a' = (A+x)/D$ and $b' = B/D$;
    then
    \(
        b' < B^{1-\alpha}
    \),
    from which
    \[
        \left|\frac{A}{B} - \frac{a'}{b'}\right|
        = 
        \frac{|x|}{B} 
        <
        \frac{1}{2 (b')^2}
        \ .
    \]
    The classical theory of continued fraction approximations
    tells us that $a'/b'$ must be one of the convergents
    \((g_1/h_1, g_2/h_2, \ldots)\) of $A/B$ (see for example
    \cite[Theorem~9.10]{LeVeque96}).
    We note that the convergents are
    obtained in reduced form, that is, with $\gcd(g_i, h_i)=1$; 
    the $h_i$ are strictly increasing; and the last term
    is $h_k = B$. Last but not least, this sequence is finite and has
    polynomial size in $\log B$
    (this is closely related to the computation of $\gcd(A, B)$, and
    can be done in deterministic polynomial time~\cite{Knuth97,GathenGerhard}). 

    A solution yields $(A+x)/B = g_i/h_i$: that is, $h_i (A+x) = B
    g_i$. Since $g_i$ and $h_i$ are coprime, this implies that $g_i
    \mid A+x$.
    Put $D = (A+x)/g_i$. Now $h_i D = B$, and $h_i$ must divide~$B$. 
    If this is the case, then we have recovered the \ACDP{} solution 
    $(x,D) = (Dg_i-A, B/h_i)$.
    We can stop as soon as \(h_i \geq B^{1-\alpha}\),
    because such \(h_i\) cannot yield $D > B^{\alpha}$.
\end{proof}
\begin{remark}\label{AltB}
    As noted in \cite{Howgrave-Graham01}, if our problem requires $A > B$,
    then we can replace $A$ by using $q$ such that $A-q B < B$.
\end{remark}

\begin{proposition}\label{prop:CF}
    Given integers \(A < B\),
    Algorithm~\ref{alg:ACDCF} (\ACDCF)
    computes all integers $|x_0| < X = B^{2\alpha-1}/2$ for some $\alpha >
    1/2$ such that there exists $D > B^{\alpha}$ dividing
    both $A+x_0$ and $B$, or reports that no such
    $x_0$ exists. The algorithm runs in deterministic polynomial time
    in \(\log B\).
\end{proposition}
\begin{proof}
    It is enough to use Proposition \ref{HG_CF} for all possible $\alpha >
    1/2$, or equivalently test all convergents of $A/B$ (as noted above, since
    \(A/B\) is rational, its sequence of convergents is finite, and has
    polynomial length).
    Algorithm~\ref{alg:ACDCF} begins by computing these convergents.
\end{proof}

\begin{remark}
    The bound $X$ in Proposition~\ref{prop:CF}
    can be relaxed to $B^{2 \alpha-1}$
    (without the factor of \(1/2\)) 
    if we use intermediate convergents,
    but asymptotically this has no real importance.
\end{remark}

\begin{remark}\label{followup}
If we want {\em all} solutions $(x, D)$, then we have to include all
solutions coming from divisors of $D$, in the sense of Remark
\ref{sqf-A-B}---but finding the divisors of $D$ would imply resorting
to non deterministic and/or non-polynomial-time algorithms.
\end{remark}

\begin{algorithm}[hbt]
    \caption{Computing approximate common divisors using continued fractions.}
    \label{alg:ACDCF}
    \Function{\ACDCF{$A$, $B$}}{
        \Input{$A < B$}
        \Output{The set of solutions \((x,D)\) 
            to the \ACDP{} for \((A,B)\)
            (so $D \mid (A + x)$ and $D \mid B$)
            with $|x| < X := \frac{1}{2}B^{2 \alpha-1}$ 
            and $D > D_0 := B^{\alpha}$ 
            for some $\alpha > 1/2$.
        }
        \( (g_0/h_0,\ldots,g_n/h_n) \gets \)
        continued fraction convergents of $A/B$
        \;
        \(\mathcal{R} \gets \emptyset\)
        \;
        \For{\(i \gets 0\) \KwTo \(n\)}{ 
            \If{$h_i \mid B$ (and $h_i > 1$)}{
	            $D \gets B/h_i$\;
		        $x \gets D g_i - A$\;
                \(\mathcal{R} \gets \mathcal{R} \cup \{(x, D)\}\)
                \;
            }
        }
        \Return{\(\mathcal{R}\)}
    }
\end{algorithm}

\subsection{Computing approximate common divisors via lattice reduction}
\label{sec:ACDL}

Theorem~\ref{thm:ACD_L} enlarges the set of $\alpha$ for which we can find the
factorization of $N$. The proof of correctness can be found
in~\cite{Howgrave-Graham01}; optimal parameters are given in
Algorithm \ref{alg:ACDL}.
Remark \ref{followup} applies here too.

\begin{theorem}[Howgrave-Graham]
    \label{thm:ACD_L}
    Given integers \(A < B\),
    and \(\alpha\) in \((1/2,1)\) 
    and \(\beta\) in \((0, \alpha^2)\),
    Algorithm~\ref{alg:ACDL} (\ACDL)
    computes all $x$ such that there is some $D$ with \((x, D)\) a solution
    to the \ACDP{} for \((A,B)\) with $|x| < X := B^\beta\), $D > D_0 =
    B^\alpha$, in deterministic polynomial time
    in \(\log B\) and \(1/\epsilon\) where $\epsilon = \alpha ^2-\beta$.
\end{theorem}

\begin{algorithm}[hbt]
    \caption{Approximate common divisors using LLL}
    \label{alg:ACDL}
    \Function{\ACDL{$A$, $B$, $\alpha$, $\beta$}}{
        \Input{$A < B$ 
            and $\alpha \in (1/2, 1)$, $\beta \in (0, \alpha^2)$ 
        }
        \Output{The set of solutions \((x,D)\) to the \ACDP{} for \((A,B)\)
            (so $D \mid (A + x)$ and $D \mid B$)
            with $|x| < X:=B^{\beta}$ and $D > D_0:= B^{\alpha}$.
        }
        $h \gets \lceil \alpha (1-\alpha)/\epsilon\rceil - 1$
            where $\epsilon=\alpha^2-\beta > 0$
        \;
        $u \gets \lceil h \alpha\rceil$
        \;
        \(L \gets\) the \((h+1)\)-dimensional lattice of
        \(\tilde{p}_i\)-coefficients 
        defined in the proof of Theorem~\ref{thm:ACD_L}
        \;
        \((v_0,\ldots,v_h)
        \gets
        \ShortVector{L}\)
        \tcp*{Use LLL;
	    each $v_i$ is divisible by $X^i$}
        $P(Z) \gets \sum_{i=0}^h (v_i/X^i) Z^i$
        \;
        $\mathcal{X} \gets$ integer roots of $P(Z)$
        \;
        $\mathcal{R} \gets \emptyset$
        \;
        \For{$x \in \mathcal{X}$}{
            $D \gets \gcd(A+x, B)$
            \;
            \If{$1 < D < B$}{
               $\mathcal{R} \gets \mathcal{R} \cup \{(x, D)\}$
               \;
            }
        }
        \Return{\(\mathcal{R}\)}
        \;
    }
\end{algorithm}

%

\subsection{Algorithms to refine partial factorizations}
Many algorithms (including some given below) return nontrivial divisors of~\(N\),
rather than complete prime factorizations.
We can improve the quality of these partial factorizations
using some basic auxiliary algorithms, that all run in deterministic
polynomial time. The following two algorithms are taken from~\cite{BaDrSh93}.
        
\Refine takes a set of integers \(\{M_1,\ldots,M_k\}\),
and returns a set of pairs \((N_i,e_i)\)
with each \(N_i > 1\) and \(e_i > 0\),
and with the \(N_i\) all pairwise coprime,
such that \(\prod_i M_i =\prod_i N_i^{e_i}\).
This can be done by iterating the rewriting formula
\[
    M_1 M_2 = (M_1/d) (d^2) (M_2/d)
    \quad
    \text{where}
    \quad
    d = \gcd(M_1, M_2)
    \,.
\]
Faster algorithms for \Refine appear in~\cite{Bernstein05} and ~\cite{BeLePi07}.

\CleanDivisors takes an integer \(m\)
and a list of divisors \((d_1,\ldots,d_k)\) of \(m\),
and returns a set of pairs \((m_i,e_i)\)
such that \(m = \prod_i m_i^{e_i}\) where the $m_i$ are
pairwise coprime and such that each 
$d_i = \prod_j m_j^{e_{i, j}}$ for some $e_{i, j} \geq 0$.
This can be done by applying \Refine
to $\{d_1, m/d_1, \ldots, d_k, m/d_k\}$,
which yields \(\{(n_1,f_1),\ldots,(n_\ell,f_\ell)\}\)
such that $\prod_{i=1}^\ell n_i^{f_i} = m^k$ 
with the \(n_i\) pairwise coprime.
The $f_i$ are all multiples of~$k$, 
so the result 
is $\{(n_1,{f_1/k}), \ldots, (n_\ell,{f_\ell/k})\}$.

\section{
    Finding particular divisors of an integer
}
\label{sct-prep}

This section describes algorithms that find 
a large divisor \(D\) of $N$
if 
$(D-z)\mid M$ for an auxiliary integer $M$ and some small \(z\).
We use the simplest case, where \(D = p\) is prime and \(z = 1\)
(resp. $z = -1$)
for factoring with \(\Phi\) (resp. $\Sigma$) in~\S\ref{sct-Phi},
but we think that these more general results have independent interest.


\subsection{Factoring with unknown difference}

First, consider the search for divisors \(D\) of \(N\)
such that that \((D - z)\mid M\) 
where \(M\) is \emph{given}
and a small \(z \not= 0\) is \emph{unknown}.
For our needs (factoring $N$), the interesting case has $\gcd(N, M)=1$.
We can compute such \(D\) in deterministic polynomial time by
reduction to an \ACDP{} instance as follows.
Let \(y = M/(D-z)\),
so \(M=y (D-z)=yD-yz\);
computing the product \(yz\) 
leads to the divisor \(D\) by computing \(\gcd(N, M+yz)\) since
$\gcd(N, M)=1$. 
But \(x=yz\) is the solution of the modular equation
\(
    M+x \equiv 0 \pmod{D}
\),
and thus \((x, D)\) is a solution to \ACDP{} for $(A, B) = (M, N)$.
At this point, we finish using the results of~\S\ref{sct-tools}.

\begin{theorem}\label{key2}
    Let $1/2 < \alpha < 1$ be a real number.
    Let \(N\) and \(M\) be {\em coprime} integers and
    put $M = N^{\theta}$ with $1/2 < \alpha < \theta < 1$.
    Suppose there exists $D > N^{\alpha}$ such that \(D\mid N\)
    and \((D - z)\mid M\) 
    for a small \emph{unknown} integer \(z \not= 0\), with $|z| \leq
    N^{\varepsilon}$ such that $0 < \varepsilon < \alpha$.
    Then we can compute \(D\) in deterministic polynomial time in the
    following two cases:
    \begin{enumerate}
        \item
            $\varepsilon \leq 3 \alpha - 1 - \theta$;
        \item
            $\varepsilon < \alpha^2+\alpha - \theta$.
    \end{enumerate}
\end{theorem}
\begin{proof}
    Write $\beta = \log (M/(D-z))/\log N \approx \theta-\alpha$,
    so that $y = N^{\beta}$ 
    and $x = y z = N^{\theta+\varepsilon-\alpha}$.
    In Case~(1) we have $\theta+\varepsilon-\alpha \leq 2 \alpha - 1$,
    and Proposition~\ref{prop:CF} applies.
    In Case~(2),
    since \(|z| < N^{\alpha^2-\beta}\), we get $|x| \leq N^{\alpha^2}$,
    and we can compute \(x\) in deterministic polynomial time
    by Theorem~\ref{thm:ACD_L}.
\end{proof}

\subsection{Factoring with known difference}

Now we consider the opposite case:
finding \(D\mid N\) such that \((D - z) \mid M\)
where \(z\) is \emph{known}.
In our applications with $\Phi$ and $\Lambda$, we take \(z = 1\);
with $\Sigma$ we take $z=-1$.
In full generality, provided \(z\) is especially small,
we use Coppersmith's bivariate method from Theorem~\ref{cop}
to obtain the following result.

\begin{theorem}\label{key}
    Let \(N\), \(u\), and $M$ be integers
    with \(u \not= 0\) and \(|u| = N^{\varepsilon}\),
    and \(M = N^\theta\) with $0\leq \varepsilon < \theta < 1$. Fix $0
    \leq \varepsilon < \alpha < \theta$.
    Suppose there exists $D > N^{\alpha}$ such that \(D\mid N\)
    and \((D-u)\mid M\).
    Then Algorithm~\ref{alg:key} computes $D$ in deterministic
    polynomial time if 
    \begin{equation}\label{alpha}
        \alpha > \frac{1}{4}\, (1+\theta)
        \,.
    \end{equation}
\end{theorem}
\begin{proof}
    Rewrite the problem as
    $N = x_0 D$ and $M = y_0 (D-u)$, 
    so $M + u y_0 = y_0 D$. 
    Eliminating $D$, 
    we see that $(x_0,y_0)$ is a zero of 
    $f(x, y) = N y - x (M + u y) = -M x + N y - u x y .$
    If $D > N^{\alpha}$, then $x_0 <
    N^{1-\alpha}$ and $y_0 < N^{\theta-\alpha}$ are both small, 
    and we can use Theorem~\ref{cop}.  
    First, as in \cite{Coron04}, we let 
    $$f^*(x, y) := f(x, y+1) = N - (M+u) x + N y - u x y.$$
    Now \(f^*\) is irreducible, and linear in \(x\) and \(y\), so
    it meets the conditions of Theorem~\ref{cop}
    with \(\delta = 1\); and \(f^*(0,-1) = 0\).
    Assume $|x_0| < X$ and $|y_0| < Y$.
    The crucial bound is
    $$\mathcal{W} = \Vert f^*(x X, y Y)\Vert^* = \max (N, (M+u) X, N Y, X Y).$$
    Using $(X,Y) = (N^{1-\alpha},N^{\theta-\alpha})$
    gives
    $XY = N^{1+\theta-2 \alpha}$
    and
    $$\mathcal{W} = \max(N, N^{1+\theta-\alpha}, N^{1+\theta-2
    \alpha}) = N^{1+\theta-\alpha}.$$
    Ignoring small constants, we want 
    $$
        1+\theta-2 \alpha < \frac{2}{3}(1+\theta-\alpha)
    $$
    which implies
    $
        \alpha 
        > 
        \frac{1}{4} (1+\theta)
    $;
    the result follows.
\end{proof}
\begin{remark}
    A weaker but simpler result can be obtained using 
    Coron's algorithm, as in~\cite[\S 2]{Coron04}:
    if we use
    $f^*(x, y) = N - (M+u) x + N y - u x y$,
    then 
    $\alpha > (1+\theta)/3$ is enough to recover $D$.
\end{remark}
\begin{remark}
    If $u < 0$, then we have $\theta \geq \alpha$ which leads to $\theta > 1/3$.
\end{remark}

\begin{corollary}\label{cor:key}
    Using the notation of Theorem~\ref{key}:
    we can recover $D$
    in deterministic polynomial time provided $D > N^{1/2}$.
\end{corollary}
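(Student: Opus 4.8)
The plan is to read this off directly from Theorem~\ref{key} by specializing the exponent $\theta$, in exact parallel with the proof of Corollary~\ref{cor:key2}. Write $D = N^\alpha$, so the hypothesis $D > N^{1/2}$ says $\alpha > 1/2$. The assumption $M \sim N$ means precisely that $\log M/\log N \to 1$, i.e.\ in the notation $M = N^\theta$ of Theorem~\ref{key} we have $\theta \to 1$. The remaining hypothesis of Theorem~\ref{key}, namely $\log(|u|)/\log(N) \sim 0$, is already part of the ambient setup we are borrowing, so it carries over for free.

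First I would substitute $\theta = 1$ into the bound \eqref{alpha}: the condition $\alpha > \tfrac14(1+\theta)$ of Theorem~\ref{key} collapses to $\alpha > \tfrac14(1+1) = \tfrac12$, which is exactly the hypothesis $D > N^{1/2}$. Hence every hypothesis of Theorem~\ref{key} is satisfied, and that theorem produces $D$ in deterministic polynomial time; this is all the proof requires.

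The only point that needs any care — and the \emph{main obstacle}, though it is a mild one — is the asymptotic bookkeeping concealed in the symbol $\sim$: for a \emph{fixed} $\alpha > 1/2$ one needs $\tfrac14(1+\theta) < \alpha$, i.e.\ $\theta > 2\alpha - 1$, which holds once $\theta$ is close enough to $1$, equivalently for all sufficiently large $N$. So the statement must be read in the same asymptotic sense as Theorem~\ref{key} itself. Compared with Corollary~\ref{cor:key2} — where $M \sim N$ forced $\beta = 1-\alpha$ and hence the quadratic threshold $\alpha^2 + \alpha - 1 > 0$ — the bivariate method of Theorem~\ref{key} is strong enough that the arithmetic here is strictly simpler, yielding the clean cutoff $\alpha = 1/2$.
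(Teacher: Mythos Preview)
Your approach is exactly the one-line derivation the paper intends (the corollary is stated there without proof): substitute $\theta \approx 1$ into the bound $\alpha > \tfrac14(1+\theta)$ of Theorem~\ref{key} to obtain the threshold $\alpha > 1/2$.

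Your ``asymptotic bookkeeping'' paragraph, however, contains an algebraic slip and is in any case unnecessary. From $\tfrac14(1+\theta) < \alpha$ one obtains $\theta < 4\alpha - 1$, not $\theta > 2\alpha - 1$; both the coefficient and the direction of the inequality are wrong. More to the point, Theorem~\ref{key} already stipulates $0 \le \theta < 1$, and for every such $\theta$ one has $\tfrac14(1+\theta) < \tfrac12$. Hence the hypothesis $\alpha > 1/2$ implies condition~\eqref{alpha} for \emph{all} admissible $\theta$, with no need for $\theta$ to be close to~$1$. The clause ``$M \sim N$'' in the corollary merely signals that the threshold $1/2$ is essentially sharp in that regime; it is not required to make the deduction go through.
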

\begin{proof}
    It suffices to observe that $\alpha > 1/2 > (1+\theta)/4$ for $0 \leq \theta < 1$.
\end{proof}

\begin{algorithm}[hbt]
    \caption{Factoring with known difference}
    \label{alg:key}
    \Function{\FactoringWithKnownDifference{$N$, $M$, $u$, $\alpha$}}{
        \Input{Positive integers $N$ and $M$, an integer $u$,
            and a real number $\alpha$ such that 
            $(1+\theta)/4 < \alpha < \theta$ where
            $\theta := \log M / \log N$}
        \Output{$\{D > N^{\alpha} : D \mid N \text{ and } (D-u)\mid M\}$}
        $f^* \gets N - (M+u) x + N y - u x y$ in $\ZZ[x,y]$
        \;
        $\mathcal{R} \gets \BivariateCoppersmith(f^*, X, Y)$
        {where} \(X = N^{1-\alpha}\)
        {and} \(Y = N^{\theta-\alpha}\)
        \;
        $\mathcal{D} \gets \emptyset$
        \;
        \For{$(x, y)$ in $\mathcal{R} \setminus \{(0, -1)\}$}{
            $\mathcal{D} \gets \mathcal{D} \cup \{N/x\}$
        }
        \Return{$\mathcal{D}$}\;
    }
\end{algorithm}

\section{
    Factoring with the \texorpdfstring{$\Phi$}{Phi},
    \texorpdfstring{$\Lambda$}{Lambda}, and
    \texorpdfstring{$\Sigma$}{Sigma} oracles
}
\label{sct-Phi}

We now return to factoring with oracles.
We treat the closely-related problems
of factoring with \(\Phi\), \(\Lambda\), or $\Sigma$
simultaneously here,
before treating \(\ORDER\) in~\S\ref{sct-order}.
We consider odd \(N\),
since detecting and removing powers of \(2\) is easy.
Ordering the prime divisors of \(N\) by decreasing size,
we write
\[
    N = \prod_{i=1}^{\omega(N)} p_i^{e_i}
    \quad
    \text{with primes}
    \quad
    p_1 > p_2 > \cdots > p_{\omega(N)} > 2
    \,.
\]
To simplify the
exposition, the function associated with an oracle $\varpi$ will be
denoted by $\mathcal{F}(\varpi)$ 
(e.g., $\mathcal{F}(\Phi)=\varphi$).

\subsection{Reduction to the squarefree case}
\label{sct:sqf}

We begin by reducing to the case of squarefree \(N\):
that is, \(e_1 = \cdots = e_{\omega(N)} = 1\).
\begin{theorem}[Landau \cite{Landau88}]
    \label{th:Landau}
    Given \(N\) and $\varpi \in \{\Phi, \Lambda, \Sigma\}$,
    Landau's algorithm returns in deterministic polynomial time a list
    \((N_1,\ldots,N_r)\)
    such that \(N = N_1N_2^2\cdots N_r^r\),
    each \(N_i\) is squarefree or 1,
    and the \(N_i\) are pairwise coprime
    using $O(\omega(N))$ calls to $\varpi$.
\end{theorem}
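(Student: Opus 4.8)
The plan is to argue by strong induction on $N$, recursing on $g := \gcd(N,\varpi(N))$. First I would isolate the three properties of $g$ on which correctness rests, valid for both $\varpi = \Phi$ and $\varpi = \Lambda$: (i) if $N>1$ then $1 \le g < N$ — immediate since $\lambda(N) \mid \varphi(N) < N$ — which gives termination; (ii) $\nu_p(g) \le \nu_p(N)$ for every prime $p$, trivially since $g \mid N$; and (iii) if $p \mid N$ but $p \nmid g$ then $\nu_p(N) = 1$. For (iii), note $p \nmid g$ forces $\nu_p(\varpi(N)) = 0$; for odd $p$ the factor $p^{\nu_p(N)-1}$ occurring in $\varphi(p^{\nu_p(N)})$ (and inside the lcm defining $\lambda$) shows $\nu_p(\varpi(N)) \ge \nu_p(N)-1$, so $\nu_p(N) = 1$; for $p = 2$, $\nu_2(\varpi(N)) = 0$ says $\varpi(N)$ is odd, which forces $N \le 2$, hence $N = 2$ and $\nu_2(N) = 1$. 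The base case is $g=1$: then (iii) makes every prime of $N$ have exponent $1$, so $N$ is squarefree and \Landau{} correctly returns $(N)$. For $g>1$ we recurse on the strictly smaller $g$, and the induction hypothesis yields a valid decomposition $g = \prod_i v_i^{\,i}$ with the $v_i$ pairwise coprime and each squarefree or $1$; it remains to verify that the loop, followed by $v_1 \gets v_1 R$ with $R = N/g$, produces a valid decomposition of $N$.

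For this I would run two invariants in parallel through the loop. The first is a one-line computation: the body of the loop multiplies $\big(\prod_j v_j^{\,j}\big)\cdot R$ by $t^{-i}\cdot t^{\,i+1}\cdot t^{-1}=1$, so this stays equal to its initial value $g\cdot R = N$; and the closing assignment $v_1 \gets v_1 R$, which replaces the sub-product $v_1^{\,1}\cdot R$ by $(v_1R)^{1}$, leaves it equal to $N$ as well, so the output satisfies $\prod_j v_j^{\,j}=N$. The second, more delicate invariant is that at every moment each prime divides at most one of the current $v_j$, and only to the first power (so the $v_j$ stay squarefree and pairwise coprime); this survives each update $(v_i,v_{i+1})\gets(v_i/t,\,v_{i+1}t)$ because $t=\gcd(R,v_i)$ divides the squarefree $v_i$ and is coprime to $v_{i+1}$. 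The real work is a prime-by-prime bookkeeping of \emph{which} slot each prime sits in: fixing $p\mid N$ with $e:=\nu_p(N)$ and $a:=\nu_p(g)\le e$, the recursion places $p$ in slot $a$ (with $\nu_p(R)=e-a$), and one shows by induction on the loop counter that if $a\ge 1$ the single factor $p$ is handed up one slot at a time at steps $i=a,a+1,\dots,e-1$ and then rests permanently in slot $e$, while if $a=0$ then $e=1$ by (iii), $p$ divides no $v_j$, is untouched by the loop, and reaches slot $1=e$ only via the final $v_1 \gets v_1 R$. Hence each prime of $N$ ends up dividing exactly one output entry, namely $v_{\nu_p(N)}$, and only to the first power; together with $\prod_j v_j^{\,j}=N$ this is the asserted decomposition (squarefreeness and coprimality of the $v_j$ being immediate from the former). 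One also checks that indices stay in range: a prime that has reached slot $s$ must have $\nu_p(N)\ge s$, hence $\nu_p(N)=s$ since $\nu_p(N)\le \log_2 N\le s$, hence $\nu_p(R)=0$, so such a prime is inert at step $s$ and nothing is ever written to $v_{s+1}$.

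The only remaining point is the count of oracle calls: each invocation of \Landau{} issues exactly one query on Line~\ref{alg:Landau:oracle-call} and then makes a single recursive call on $g$, so the number of queries is the length of the chain $N \mapsto g \mapsto \gcd(g,\varpi(g)) \mapsto \cdots$, which terminates by (i) and whose length I would bound by $O(\omega(N))$ as in Landau's analysis. I expect the genuine obstacle to be not any single step above but organizing the ``slot-sliding'' argument so that the interleaving of the divisions $v_i \gets v_i/t$ and multiplications $v_{i+1}\gets v_{i+1}t$ — which occur at different loop steps and for several primes at once — never disturbs a prime that has already settled; carrying the invariant ``each prime divides at most one $v_j$, and only to the first power'' rather than reasoning about the $v_j$ as opaque integers is, I think, what makes this manageable.
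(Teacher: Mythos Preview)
Your argument is correct and follows the same inductive skeleton as the paper's: recurse on $g=\gcd(N,\varpi(N))$, then verify that the loop and the final $v_1\gets v_1R$ lift the decomposition of $g$ to one of $N$. The difference lies in how the loop is analysed. The paper invokes Lemma~\ref{gcd-phi-lambda} to obtain the explicit shape $g=\big(\prod_i p_i^{e_i-1}\big)\big(\prod_i p_i^{\varepsilon_i}\big)$, which immediately gives $\nu_p(R)\in\{0,1\}$ for every prime $p$; hence each prime moves \emph{at most one slot}, and the loop correctness is a one-line observation. You instead isolate the weaker axioms (i)--(iii) and run a general ``slot-sliding'' argument in which a prime with $\nu_p(R)=e-a$ is handed up $e-a$ times. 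This is more bookkeeping than the paper needs---in reality $e-a\le 1$ always---but it is sound, it makes the invariants explicit, and it dispenses with Lemma~\ref{gcd-phi-lambda} (handling $\varpi=\Phi$ and $\varpi=\Lambda$ uniformly via your property~(iii)). On the oracle count, you and the paper are on equal footing: both simply assert the $O(\omega(N))$ bound and defer to Landau, without giving the argument.
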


\subsection{Reduction to the case \texorpdfstring{$\gcd(N, \mathcal{F}(\varpi)(N)) = 1$}{gcd(N,phi(N))=1}}

Suppose $N$ is squarefree. 
For $\varpi = \Phi$, $\Lambda$, or $\Sigma$,
if $\gcd(N, \mathcal{F}(\varpi)(N)) > 1$
then we obtain a nontrivial divisor $d$ of \(N\), and we can
combine the factorizations of $d$ and $N/d$ recursively.
Thus, we reduce to the problem of factoring
squarefree \(N\) where \(\gcd(N,\mathcal{F}(\varpi)(N)) = 1\).

\subsection{Products of two primes}
\label{varphik2}

It is well-known that
we can factor \(N = p_1p_2\) 
given \(\varphi(N)\),
as we recall in
Lemma~\ref{lemma:factorization-from-varphi}.
This immediately yields Algorithm~\ref{Phi2} (\FactorizationWithPhiTwo),
which factors a squarefree integer \(N\) with \(\omega(N) = 2\) given
$M = \varphi(N)$. Rephrased, this gives also that oracle $\Phi$ can
answer the decision problem of determining whether $\omega(N) = 2$.

\begin{lemma}
    \label{lemma:factorization-from-varphi}
    If \(N\) is a product of two distinct primes,
    then the two primes are 
    \[
        s/2 \pm \sqrt{(s/2)^2 - N}
        \quad
        \text{where}
        \quad
        s := N + 1 - \varphi(N)
        \ .
    \]
\end{lemma}
\begin{proof}
    If \(N = p_1 p_2\) with \(p_1\) and \(p_2\) prime,
    then \(\varphi(N) = (p_1-1)(p_2-1) = N - (p_1+p_2) + 1\);
    so \(s = p_1 + p_2\), 
    and \(p_1\) and \(p_2\) are the roots of the quadratic equation
    \(X^2 - sX + N\).
\end{proof}

\begin{algorithm}[hbt]
    \caption{Factoring a 2-factor integer using $M = \varphi(N)$ \label{Phi2}}
    \label{alg:factorization-with-phi-2}
    \Function{\FactorizationWithPhiTwo{$N$, $M$}}{
        \Input{$N$ and $M = \varphi(N)$, where $N$ is squarefree}
        \Output{%
            $\{p_1, p_2\}$ if $N$ is the product of two distinct primes,
            or $\emptyset$
        }
        $s \leftarrow N+1-M$
        \;
        \(\Delta \gets s^2 - 4N\)
        \tcp*{\(\Delta = \) discriminant of \(X^2 - sX + N\)}
        \If{\(\Delta\) is not square}{
            \Return{\(\emptyset\)}
        }
        \(p_1 \gets \frac{1}{2}(s + \sqrt{\Delta})\)
        \;
        \(p_2 \gets N/p_1\)
        \;
        \Return{$\{p_1, p_2\}$}
        \;
    }
\end{algorithm}

To convert Algorithm~\ref{alg:factorization-with-phi-2}
into an algorithm taking \(\lambda(N)\) instead of \(\varphi(N)\),
we use Lemma~\ref{lemma:lambda-to-varphi},
which shows that when \(\omega(N) = 2\),
we can efficiently compute \(\varphi(N)\) from \(\lambda(N)\).
Thus, any algorithm calling \(\Phi\) can be immediately
transformed into an algorithm making the same number of calls to~\(\Lambda\).
In particular, Algorithm~\ref{Phi2}
can be used with \(M = \lambda(N)\cdot\gcd(N-1,\lambda(N))\) 
instead of \(\varphi(N)\).

\begin{lemma}
    \label{lemma:lambda-to-varphi}
    If \(N = p_1p_2\) is a product of two distinct primes, 
    then \(\varphi(N) = \lambda(N)\cdot\gcd(N-1,\lambda(N))\).
\end{lemma}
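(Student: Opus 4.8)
Write $N = p_1 p_2$ with $p_1 > p_2$ both prime. The plan is to compare the two sides by examining the power of $2$ and the odd parts separately, using the structure $\lambda(N) = \lcm(p_1-1,p_2-1)$ and $\varphi(N) = (p_1-1)(p_2-1)$. First I would recall the elementary identity $(p_1-1)(p_2-1) = \lcm(p_1-1,p_2-1)\cdot\gcd(p_1-1,p_2-1)$, so that the claim reduces to showing $\gcd(p_1-1,p_2-1) = \gcd(N-1,\lambda(N))$. This is the heart of the matter.

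To prove $\gcd(p_1-1,p_2-1) = \gcd(N-1,\lambda(N))$, I would show each side divides the other. For the forward direction, set $g := \gcd(p_1-1,p_2-1)$; then $g \mid \lambda(N)$ since $\lambda(N) = \lcm(p_1-1,p_2-1)$ is a multiple of each $p_i-1$, and $g \mid N-1$ because $N - 1 = p_1 p_2 - 1 = (p_1-1)p_2 + (p_2 - 1)$, and $g$ divides both summands (it divides $p_2-1$ directly and $p_1-1$, hence $(p_1-1)p_2$). So $g \mid \gcd(N-1,\lambda(N))$. For the reverse direction, let $d := \gcd(N-1,\lambda(N))$ and let $\ell$ be any prime power dividing $d$, say $\ell = q^a$ with $q$ prime. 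Since $q^a \mid \lambda(N) = \lcm(p_1-1,p_2-1)$, we have $q^a \mid p_i - 1$ for at least one $i$; without loss of generality $q^a \mid p_1 - 1$. Then, using $N - 1 = (p_1-1)p_2 + (p_2-1)$ again, since $q^a \mid N-1$ and $q^a \mid (p_1-1)p_2$, we get $q^a \mid p_2 - 1$ as well. Hence $q^a \mid \gcd(p_1-1,p_2-1) = g$. As this holds for every prime power dividing $d$, we conclude $d \mid g$, completing the equality.

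The main obstacle — though it is a mild one — is the case analysis hidden in ``$q^a \mid p_i-1$ for at least one $i$'': one must be careful that the prime power $q^a$ divides a \emph{single} $p_i - 1$ in full, rather than being split across the two factors, which is exactly what the $\lcm$ (as opposed to the product) guarantees. Once that is pinned down, the identity $N-1 = (p_1-1)p_2 + (p_2-1)$ does all the remaining work symmetrically, and the lemma follows by combining $\gcd(p_1-1,p_2-1) = \gcd(N-1,\lambda(N))$ with $(p_1-1)(p_2-1) = \lcm(p_1-1,p_2-1)\gcd(p_1-1,p_2-1)$.
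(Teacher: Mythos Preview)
Your proposal is correct and follows essentially the same approach as the paper: both reduce the claim, via the identity $(p_1-1)(p_2-1)=\lcm(p_1-1,p_2-1)\gcd(p_1-1,p_2-1)$, to showing $\gcd(p_1-1,p_2-1)=\gcd(N-1,\lambda(N))$. The only cosmetic difference is that the paper parametrizes $p_i-1=gq_i$ with $\gcd(q_1,q_2)=1$ and computes $\gcd(N-1,\lambda(N))=g\cdot\gcd(gq_1q_2+q_1+q_2,q_1q_2)=g$ directly, whereas you argue mutual divisibility prime-power by prime-power using $N-1=(p_1-1)p_2+(p_2-1)$; both arguments are equally short and rest on the same arithmetic.
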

\begin{proof}
    Suppose $N = p_1p_2$.
    Write 
    $g = \gcd(p_1-1, p_2-1)$;
    then 
    $p_1-1 = g q_1$ and $p_2-1 = g q_2$ with $\gcd(q_1, q_2) = 1$.
    Now
    $$\lambda(N) = (p_1-1)(p_2-1)/g = g q_1 q_2 \ , $$
    from which
    $\gcd(N-1,\lambda(N)) = g\cdot\gcd(gq_1q_2 + q_1 + q_2, q_1q_2)$,
    but
    $\gcd(g q_1 q_2 + q_1 + q_2, q_1 q_2) = 1$.
\end{proof}
Finally, for the oracle $\Sigma$, given $\sigma(N) = N+1+p_1+p_2$,
we immediately recover $p_1+p_2$ and then compute $p_1$ and $p_2$ as above.

\subsection{Products of more than two primes}
\label{varphikgt2}

Returning to the general squarefree case,
suppose
\[
    N = p_1\cdots p_k
    \quad
    \text{with primes}
    \quad p_1 > \cdots > p_k
    > 2
    \quad
    \text{and}
    \quad
    \omega(N) = k \ge 3
    \,.
\]
The relative sizes of the $p_i$ will be important in
what follows. We set
\[
    \alpha_i := \log_N p_i
    \ ,
    \quad
    \text{so}
    \quad
    p_i = N^{\alpha_i}
    \ . 
\]
Clearly $\sum_{i=1}^k \alpha_i = 1$
and \(1 > \alpha_1 > \cdots > \alpha_k > 0\);
so, in particular, $\alpha_1 > 1/k$ and $\alpha_k < 1/k$.

We first rephrase Corollary~\ref{cor:key} 
to show that {\em unbalanced} numbers (having a large prime factor) 
are easy to factor with $\varpi \in \{\Phi, \Lambda, \Sigma)$. In
contrast, {\em compact} \(N\) (with all prime factors $\leq N^{1/2}$)
are harder to factor. This gives us a result
already stated (in a simple form) as Theorem~\ref{thm11}.
\begin{theorem}\label{propcor:key}
  If $\omega(N) \geq 3$ and $\alpha_1 > 1/2$, then we can
  recover the divisor $D = p_1$ of $N$ in deterministic polynomial
  time in $\log(N)$ given $\mathcal{F}(\varpi)(N)$ for $\varpi \in
  \{\Phi, \Lambda, \Sigma\}$.
\end{theorem}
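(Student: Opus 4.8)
The plan is to derive this as an essentially immediate consequence of Theorem~\ref{key} (indeed of its Corollary~\ref{cor:key}), applied with the ``known difference'' \(u = 1\) and target divisor \(D = p_1\). Since we are in the squarefree setting of \S\ref{varphikgt2},
\[
    \varphi(N) = \prod_{i=1}^k(p_i-1)
    \qquad\text{and}\qquad
    \lambda(N) = \lcm_{i=1}^k(p_i-1)
    \,,
\]
so \((p_1-1)\) divides \emph{both} of them. Let \(M\) denote whichever of \(\varphi(N), \lambda(N)\) we are given. Then \(D = p_1\) divides \(N\), \((D-u) = (p_1-1)\) divides \(M\), and \(u = 1 \neq 0\) satisfies \(\log(|u|)/\log(N) = 0\); these are exactly the hypotheses of Theorem~\ref{key}.

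It remains to check the size condition of Theorem~\ref{key}. Set \(\theta := \log_N M\), so that \(0 \le \theta < 1\) since \(1 \le M < N\). Theorem~\ref{key} then recovers \(D = p_1 = N^{\alpha_1}\) in deterministic polynomial time provided \(\alpha_1 > \tfrac14(1+\theta)\); and \(\theta < 1\) gives \(\tfrac14(1+\theta) < \tfrac12 < \alpha_1\), so this is satisfied. (For \(M = \varphi(N)\) one could instead quote Corollary~\ref{cor:key} directly: \(\varphi(N) = N\prod_i(1-1/p_i)\) has \(N/\varphi(N) = O(\log\log N)\), so \(\theta \to 1\), i.e.\ ``\(M \sim N\)''. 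But going through Theorem~\ref{key} as above handles \(\Phi\) and \(\Lambda\) uniformly and uses only the single given value.)

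The one point needing care is that Theorem~\ref{key} is phrased in terms of the true exponent \(\alpha\), while a priori we know only \(\alpha_1 > 1/2\). This causes no real difficulty: the other exponent \(\theta = \log_N M\) \emph{is} known, since we are handed \(M\) explicitly, so in the bivariate step inside the proof of Theorem~\ref{key} we may simply run with the bounds \((X,Y) = (N^{1/2},\,N^{\theta - 1/2})\). The hypothesis \(\alpha_1 > 1/2\) guarantees both \(|x_0| = N/p_1 < X\) and \(|y_0 - 1| \le M/(p_1-1) < Y\), and with these choices the condition of Theorem~\ref{cop} (here with \(\delta = 1\)), \(XY < \mathcal{W}^{2/3}\), reduces exactly to the inequality \(\theta < 1\) already established. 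A secondary remark: for \(\Lambda\), \(\lambda(N)\) can be far smaller than \(N\), so Corollary~\ref{cor:key} does not apply directly --- but a smaller \(\theta\) only \emph{weakens} the requirement \(\alpha_1 > \tfrac14(1+\theta)\), so \(\alpha_1 > 1/2\) suffices in every case.
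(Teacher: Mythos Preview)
Your proof is correct and follows the same route as the paper: invoke Theorem~\ref{key} (or its Corollary~\ref{cor:key}) with \(u=1\) and \(D=p_1\), using that \((p_1-1)\mid M\) for \(M\in\{\varphi(N),\lambda(N)\}\). The paper's proof is the terse two-line version citing Corollary~\ref{cor:key} for \(\varphi(N)\) and deferring the \(\lambda(N)\) remark to the subsequent paragraph; your write-up is more careful in treating \(\Lambda\) uniformly via \(\theta<1\) and in spelling out how to choose \((X,Y)\) when only the lower bound \(\alpha_1>1/2\) is known, but the underlying argument is identical.
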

\begin{proof}
    We use $(D-1)\mid \varphi(N)$ (resp. $(D-1) \mid \lambda(N)$);
    the result follows directly from Corollary~\ref{cor:key}. The same
    holds for $\Sigma$ using $(D+1) \mid \sigma(N)$.
\end{proof}
\begin{remark}
When using $\lambda(N)$ in Theorem~\ref{propcor:key},
we can recover $p_1$ in deterministic polynomial time 
provided $\alpha_1 > (1+\theta)/4$,
where $\theta = \log \lambda(N)/\log N$.
When $\theta$ is significantly smaller than~\(1\), 
this gives a substantially lower bound on $\alpha_1$;
but finding a condition analogous to
Inequality~\eqref{eq:funky-condition} 
is not so easy in that case.
\end{remark}

The results of~\S\ref{sct-prep}
yield conditions on the \(\alpha_i\)
under which factors of \(N\)
can be computed with the algorithms of~\S\ref{sct-tools}.
Theorems~\ref{thm52}
and~\ref{th:ACDL-theoretical} show that we can factor \(N\) by
solving \ACDP{} instances if the \(p_i\) satisfy certain relative
size conditions. 
As a first step, Theorem~\ref{thm52} gives conditions for efficient
factoring using Algorithm~\ref{algo_factor} (\SplitCF),
which applies \ACDCF using \(\Phi\) or $\Sigma$. 

\begin{theorem}\label{thm52}
    Suppose $\omega(N) \geq 3$
    and there
    exists $1 \leq r < \omega(N)$ 
    such that
    \begin{equation}
        \label{eq:funky-condition}
        \alpha_r \geq 2 \sum_{i=r+1}^{\omega(N)} \alpha_i
        \,. 
    \end{equation}
    Then \ACDCF
    recovers the factor $D = \prod_{i=1}^r p_i$ 
    in deterministic polynomial time given \(\varphi(N)\) or~$\sigma(N)$.
\end{theorem}
\begin{proof}
    Write \(\alpha = \sum_{i=1}^r\alpha_i\).
    The hypothesis implies \(\alpha > 1/2\);
    otherwise 
    \(\alpha_r \geq 2 (1 - \alpha)\)
    and \(\alpha \le 1/2\),
    hence \(\alpha_r \ge 1\),
    which is impossible.
    Expanding the formula for \(\varphi(N)\)
    yields
    $\varphi(N) = D Q_1 - N/p_r + Q_2 $
    for some \(Q_1\) and \(Q_2\).
    If \(x = N/p_r - Q_2\),
    then \((x, D)\)
    is a solution to the \ACDP{}
    for \((A,B) = (\varphi(N),N)\)
    with \((M,X) = (N^\alpha,N^{2\alpha-1})\),
    and \ACDCF 
    will find \((x, D)\) 
    because \(\alpha > 1/2\).
    In this case
    $x \approx N/D = N^{1-\alpha}$,
    and the condition becomes 
    $1-\alpha_r \leq 2\alpha - 1$,
    which yields Inequality~\eqref{eq:funky-condition}.

    The same reasoning is valid for $\sigma(N)$, simply changing signs
    to get $\sigma(N) = D Q_1 + N/p_r + Q_2 $. (Strictly speaking, we
    should use $(A, B) = (\sigma(N)-N, N)$ to get $A < B$: see Remark
    \ref{AltB}.)
\end{proof}
\begin{algorithm}[hbt]
    \caption{Splitting an integer using ACDCF}
    \label{algo_factor}
    \Function{\SplitCF{$N$, $\varpi$}}{
        \Input{$N$ to be factored using oracle $\varpi \in \{\Phi, \Sigma\}$}
        \Output{$\emptyset$ or a set of pairs $(M_i,e_i)$,
            with the $M_i$ pairwise coprime
            and $N = \prod_i M_i^{e_i}$}
        %
        $M \stackrel{\varpi}{\longleftarrow} \mathcal{F}(\varpi)(N)$
        \;
        $sgn \gets \begin{cases}
            -1 & \text{if } \varpi = \Phi
            \\
            1 & \text{if } \varpi = \Sigma
        \end{cases}
        $
        \;
        \If(\tcp*[f]{$N$ is prime}){$M = N+sgn$}{
            \Return{$\{(N,1)\}$}
        }
        $\mathcal{A} \gets \ACDCF(M, N)$
        \;
        \For{$(x, D)$ in $\mathcal{A}$}{
            $\mathcal{D} \gets \mathcal{D} \cup \{D,N/D\}$
        }
        \If{$\mathcal{D} = \emptyset$}{
            \Return{$\emptyset$}
            \;
        }
        \Return{\CleanDivisors{$N$, $\mathcal{D}$}}
    }
\end{algorithm}

We can go further using \ACDL instead of \ACDCF.
Theorem~\ref{th:ACDL-theoretical} is the corresponding analogue
of Theorem~\ref{thm52}.
\begin{theorem}
    \label{th:ACDL-theoretical}
    If there exist
    \(\alpha\) in \((1/2,1)\) and \(\beta\) in \((0,\alpha^2)\)
    such that \(\alpha \le \sum_{i=1}^r \alpha_i\)
    and \(1-\alpha_r \le \beta\)
    for some \(1 \leq r < \omega(N)\),
    then we can 
    recover the divisors \(D = p_1\cdots p_r\) 
    and \(N/D = p_{r+1}\cdots p_{\omega(N)}\) of \(N\)
    in deterministic polynomial time
    given \(\alpha\) and \(\beta\), using
    \(\Phi\) or $\Sigma$. 
\end{theorem}
\begin{proof}
Write
$$\varphi(N) = [(p_1-1) (p_2-1) \cdots (p_r-1)] K
= D K - p_1 p_2 p_{r-1} p_{r+1} \cdots p_{\omega(N)} + E$$
where $E$ is negligible with respect to $N/p_r$. We obtain
$$\varphi(N) = D K - (N/p_r) + E.$$
Now, Theorem~\ref{thm:ACD_L} will use \ACDL 
    given \(A = \varphi(N)\), 
    \(B = N\),
    $\alpha \le \sum_{i=1}^r \alpha_i$,
    and $\beta \le 1 - \alpha_r$
    to find \((x, D)\)
    where \(D = p_1 p_2 \ldots p_r\)
    and \(x = N/p_r - E \approx N^{1-\alpha_r}\).

    The same conclusion holds for $\sigma(N) = D K + N/p_r + E'$.
\end{proof}

Theorem~\ref{th:ACDL-theoretical}
is difficult to apply directly,
because of the subtlety alluded to in~\S\ref{sec:ACDL}:
it is not enough to simply know that \(\alpha\) and \(\beta\)
satisfying the bounds \emph{exist},
because we need to use them as parameters to \ACDL.
On the other hand,
\ACDL does not need their \emph{exact} values
(indeed, if we knew the exact value for \(\beta = 1-\alpha_r\),
then we would already know the prime factor \(p_r = N^{\alpha_r}\)).
If we can guess that a suitable \(r\) exists,
then we can give a lower bound for \(\alpha_r\)
implying a lower bound for \(\alpha\)
and an upper bound for \(\beta\)
that allow us to apply \ACDL.
While the bounds may be far from the optimal values 
of \(\alpha\) and \(\beta\), 
thus yielding sub-optimal performance for \ACDL,
the solution is still polynomial time,
and it allows us to factor some integers that \ACDCF cannot.

\begin{definition}
    For each positive integer \(r\),
    we define a constant
    \begin{align*}
        \overline{\alpha}_r & := \frac{-1 + \sqrt{1 + 4r^2}}{2r^2}
        \,.
        \intertext{%
            The first few of these constants are
        }
        \overline{\alpha}_1 
        & = 
        (-1+\sqrt{5})/2 \approx 0.618
        \,,
        \\
        \overline{\alpha}_2 
        & = 
        (-1+\sqrt{17})/8 \approx 0.3904
        \,,
        \\
        \overline{\alpha}_3 
        & = (-1 + \sqrt{37})/{18} \approx 0.2824
        \,.
    \end{align*}
\end{definition}
    
\begin{lemma}
    \label{lemma:alpha-r}
    If \(\alpha_r > \overline{\alpha}_r\)
    for some \(0 < r < \omega(N)\),
    then \(r\),
    \(\alpha = r\overline{\alpha}_r\),
    and \(\beta = 1-\overline{\alpha}_r\)
    meet the conditions of Theorem~\ref{th:ACDL-theoretical}.
\end{lemma}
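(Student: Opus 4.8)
The plan is to verify, in turn, the four requirements of Theorem~\ref{th:PACDL-theoretical} for the stated $r$, $\alpha = r\overline{\alpha}_r$, and $\beta = 1-\overline{\alpha}_r$, using only the hypothesis $\alpha_r > \overline{\alpha}_r$, the strict ordering $\alpha_1 > \cdots > \alpha_{\omega(N)}$, and $\sum_i\alpha_i = 1$. First I would record the arithmetic of $\overline{\alpha}_r$: from $2r^2\overline{\alpha}_r + 1 = \sqrt{1+4r^2}$ one squares to obtain $r^2\overline{\alpha}_r^2 + \overline{\alpha}_r - 1 = 0$, so $\overline{\alpha}_r$ is the positive root of $r^2X^2 + X - 1$. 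Two consequences follow at once: $\alpha^2 = r^2\overline{\alpha}_r^2 = 1 - \overline{\alpha}_r = \beta$; and, since $\sqrt{1+4r^2} < 1 + 2r^2$ forces $\overline{\alpha}_r < 1$, we get $\beta = 1-\overline{\alpha}_r \in (0,1)$.

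Next, the elementary part. Writing $\alpha = (\sqrt{1+4r^2}-1)/(2r)$, the bound $\alpha > 1/2$ is equivalent to $\sqrt{1+4r^2} > r+1$, i.e.\ to $3r^2 > 2r$, which holds for all $r \ge 1$; and $\alpha < 1$ is equivalent to $\sqrt{1+4r^2} < 2r+1$, i.e.\ to $0 < 4r$. Hence $\alpha \in (1/2,1)$. For the condition $\alpha \le \sum_{i=1}^r\alpha_i$: the $\alpha_i$ are strictly decreasing, so $\alpha_i \ge \alpha_r$ for every $i \le r$, whence $\sum_{i=1}^r\alpha_i \ge r\alpha_r > r\overline{\alpha}_r = \alpha$, in fact with strict inequality. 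Finally, $1-\alpha_r \le \beta = 1-\overline{\alpha}_r$ is merely a rephrasing of the hypothesis $\alpha_r \ge \overline{\alpha}_r$.

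The one point requiring care is that the identity $\alpha^2 = \beta$ places $\beta$ exactly on the boundary of the admissible window $(0,\alpha^2)$, whereas \PACDL{} needs $\epsilon = \alpha^2 - \beta > 0$ in order to run in polynomial time. This is precisely where the \emph{strict} hypothesis $\alpha_r > \overline{\alpha}_r$ is essential: it gives $1 - \alpha_r < 1 - \overline{\alpha}_r = \alpha^2$, so the window $[1-\alpha_r,\alpha^2)$ is nonempty and a legitimate $\beta$ can be chosen strictly below $\alpha^2$; equivalently, the surplus $\sum_{i=1}^r\alpha_i - \alpha \ge r(\alpha_r - \overline{\alpha}_r) > 0$ lets $\alpha$ be nudged upward instead while keeping $\beta = 1-\overline{\alpha}_r$. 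I expect this boundary bookkeeping to be the only genuine obstacle; converting ``suitable parameters exist'' into ``parameters we can actually compute, without knowing $\alpha_r$, and feed to \PACDL{}'' is exactly the role played by the practical statement Theorem~\ref{th:PACDL-practical}.
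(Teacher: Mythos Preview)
Your proposal is correct and follows essentially the same line as the paper: parametrize by $X$, observe that $\overline{\alpha}_r$ is the positive root of $r^2X^2+X-1$, verify $r\overline{\alpha}_r\in(1/2,1)$, and use $\alpha_i\ge\alpha_r$ for $i\le r$ to bound $\sum_{i=1}^r\alpha_i$ from below. You are in fact more scrupulous than the paper about the boundary issue $\beta=\alpha^2$: the paper's own argument establishes the strict inequality only for $X>\overline{\alpha}_r$ and then silently sets $X=\overline{\alpha}_r$, whereas you make explicit that the strict hypothesis $\alpha_r>\overline{\alpha}_r$ is what buys the slack needed for \PACDL{} to run with $\epsilon>0$.
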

\begin{proof}
    Let \(\widetilde{\alpha} = \sum_{i=1}^r\alpha_i\)
    and \(\widetilde{\beta} = 1 - \alpha_r\);
    these are the ideal values for \(\alpha\) and \(\beta\)
    when applying Theorem~\ref{th:ACDL-theoretical}.
    Clearly \(\widetilde{\alpha} > r\alpha_r\).
    We can therefore use Theorem~\ref{th:ACDL-theoretical}
    with \(\alpha = rX\) and \(\beta = 1 - X\)
    for any \(X \le \alpha_r\)
    such that \(1 - X < (rX)^2\);
    that is, as long as \(X > \overline{\alpha}_r\).
    Moreover,
    \(1/2 < r\overline{a}_r < 1\)
    for all~\(r > 0\).
    Hence
    \((\alpha,\beta) = (r\overline{\alpha}_r,1-\overline{\alpha}_r)\)
    meets the conditions of the theorem for the given \(r\).
\end{proof}

We emphasize that Lemma~\ref{lemma:alpha-r} only gives a
\emph{sufficient} condition for suitable \(\alpha\) and \(\beta\),
but we can use it to turn the proof of
Theorem~\ref{th:ACDL-theoretical} into an effective algorithm.

\begin{theorem}
    \label{th:ACDL-practical}
    Fix an integer \(R > 1\).
    If there exists an \(0 < r < \min(R+1,\omega(N))\)
    for which \(\alpha_r \ge \overline{\alpha}_r\),
    then Algorithm~\ref{algo_ACDL} (\SplitLLL)
    recovers the divisor \(D = p_1\cdots p_r = N^\alpha\)
    of \(N\) in deterministic polynomial time 
    using \(\Phi\) or $\Sigma$.
\end{theorem}
\begin{proof}
    Algorithm~\ref{algo_ACDL}
    tries to factor \(N\)
    by calling \ACDL
    using increasing values of \(r\)
    (up to and including \(\min(R+1,\omega(N))\), 
    which in any case is trivially bounded by \(\log_2 N\),
    though much smaller values of \(R\) are more interesting),
    with the bounds for \(\alpha\) and \(\beta\)
    suggested by Lemma~\ref{lemma:alpha-r}.
    The result therefore follows
    from \(R\) serial applications of
    Theorem~\ref{th:ACDL-theoretical}.
\end{proof}

\begin{algorithm}[hbt]
    \caption{Splitting an integer using ACDL}
    \label{algo_ACDL}
    \Function{\SplitLLL{$N$, $\varpi$, $R$}}{
        \Input{$N$ to be factored using oracle $\varpi\in\{\Phi, \Sigma\}$,
        and a bound \(R > 1\) on putative \(r\)}
        \Output{$\emptyset$ or a set of pairs $(M_i,e_i)$,
            with the $M_i$ pairwise coprime
            and $N = \prod_i M_i^{e_i}$}
        %
        $M \stackrel{\varpi}{\longleftarrow} \mathcal{F}(\varpi)(N)$
        \;
        $\mathcal{D} \gets \emptyset$
        \;
        \For{$r \gets 1$ \KwTo $R$}{
            $\overline{\alpha}_r \gets (-1+\sqrt{1+4 r^2})/(2 r^2)$
            \;
            \(
                \mathcal{A} 
                \gets 
                \ACDL(M, N, r \overline{\alpha}_r, 1-\overline{\alpha}_r)
            \)
            \tcp*{use $(\alpha,\beta) = (r\overline{\alpha}_r,1-\overline{\alpha}_r)$}
            \For{\((x, D)\) in \(\mathcal{A}\)}{
                \(\mathcal{D} \gets \mathcal{D} \cup \{(D,N/D)\}\)
            }
        }
        \If{$\mathcal{D} = \emptyset$}{
            \Return{$\emptyset$}\;
        }
        \Return{\CleanDivisors{$N$, $\mathcal{D}$}}
    }
\end{algorithm}

\subsection{Products of exactly three primes}

We can say a little more for the special case 
of squarefree~\(N\) with \(\omega(N) = 3\).
The 
difficult part is in breaking $N$: once a non-trivial divisor is
found, we are left with a prime and a product of two primes that can
be easily factored recursively using the oracle.

Write
\[
    N = p_1p_2p_3
    \quad
    \text{where}
    \quad
    p_1 > p_2 > p_3
    \ .
\]
As usual, we set \(\alpha_i = \log_N p_i\);
by definition, \(1 > \alpha_1 > \alpha_2 > \alpha_3 > 0\),
and 
\(\alpha_3\) is completely determined by \((\alpha_1,\alpha_2)\)
because \(\alpha_1+\alpha_2+\alpha_3 = 1\).
Lemma~\ref{lemma:domain-of-validity}
defines the polygon in the \((\alpha_1,\alpha_2)\)-plane
corresponding to 
the domain of validity of the exponents for $\omega(N)=3$. 
\begin{lemma}
    \label{lemma:domain-of-validity}
    With \(N\) and \(\alpha_i = \log_N p_i\) defined as above,
    \((\alpha_1,\alpha_2)\)
    lies in the region of the \((\alpha_1,\alpha_2)\)-plane
    defined by the inequalities
    \begin{align*}
        0 < \alpha_2 & < \alpha_1
        \ ,
        &
        \alpha_1 + \alpha_2 & < 1
        \ ,
        &
        \alpha_1 & > 1/3
        \ ,
        &
        2\alpha_1 + 3\alpha_2 & > 3/2
        \ .
    \end{align*}
\end{lemma}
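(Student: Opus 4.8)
The plan is to translate the three defining facts---$p_1 > p_2 > p_3$, that $p_3$ is a prime (so $p_3 > 1$), and $p_1p_2p_3 = N$---into linear inequalities on the exponents, and then combine them. Applying $\log_N$ to $N = p_1p_2p_3$ gives $\alpha_1 + \alpha_2 + \alpha_3 = 1$, and $p_1 > p_2 > p_3 \ge 2$ gives $\alpha_1 > \alpha_2 > \alpha_3 > 0$. These two facts are all I would use.

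Three of the four claimed inequalities then follow immediately. The chain $\alpha_1 > \alpha_2 > 0$ is exactly $0 < \alpha_2 < \alpha_1$; writing $\alpha_1 + \alpha_2 = 1 - \alpha_3$ and using $\alpha_3 > 0$ gives $\alpha_1 + \alpha_2 < 1$; and $\alpha_1 > 1/3$ follows by contradiction, since $\alpha_1 \le 1/3$ would force $\alpha_2 < 1/3$ and $\alpha_3 < 1/3$, so $\alpha_1 + \alpha_2 + \alpha_3 < 1$.

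For the remaining inequality $2\alpha_1 + 3\alpha_2 > 3/2$, I would first extract a second linear consequence of the ordering: $\alpha_2 > \alpha_3 = 1 - \alpha_1 - \alpha_2$ rearranges to $\alpha_1 + 2\alpha_2 > 1$. Then the identity $2\alpha_1 + 3\alpha_2 = \frac{3}{2}(\alpha_1 + 2\alpha_2) + \frac{1}{2}\alpha_1$, combined with $\alpha_1 + 2\alpha_2 > 1$ and $\alpha_1 > 1/3$, yields $2\alpha_1 + 3\alpha_2 > \frac{3}{2} + \frac{1}{6} = \frac{5}{3}$, which is comfortably more than $3/2$.

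There is no genuine obstacle here; the calculation is elementary linear arithmetic on the two relations $\alpha_1 + \alpha_2 + \alpha_3 = 1$ and $\alpha_1 > \alpha_2 > \alpha_3 > 0$. The only point worth flagging is that the stated bound $3/2$ is not tight (the region in fact satisfies $2\alpha_1 + 3\alpha_2 > 5/3$), so one simply chooses any convenient nonnegative combination of the already-established inequalities that clears $3/2$.
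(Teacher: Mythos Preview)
Your proof is correct and uses essentially the same ingredients as the paper's: both arguments for the last inequality rest on $\alpha_2+\alpha_3=1-\alpha_1$ together with $\alpha_3<\alpha_2$, the paper packaging this as a contradiction while you give the equivalent direct linear combination. Your observation that the region actually satisfies the sharper bound $2\alpha_1+3\alpha_2>5/3$ is correct (the infimum $5/3$ corresponds to the degenerate vertex $\alpha_1=\alpha_2=\alpha_3=1/3$) and is a nice addendum the paper does not state.
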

\begin{proof}
    The first three inequalities follow immediately from 
    the definition of the \(\alpha_i\).
    For the last,
    if $2 \alpha_1 + 3 \alpha_2 \leq 3/2$
    then $\alpha_2 \leq (3/2- 2 \alpha_1)/3$,
    whence
    $1-\alpha_1 = \alpha_2 + \alpha_3 < 2 \alpha_2 \leq 2/3(3/2-2 \alpha_1)$,
    so $\alpha_1/3 < 0$, 
    which is impossible.
\end{proof}

Figure~\ref{fig1} depicts the values of $(\alpha_1, \alpha_2)$ 
that our methods can tackle, shading in various regions of the polygon of
Lemma~\ref{lemma:domain-of-validity}. 
Each result applies only to the \emph{interior} of the corresponding
region, and does not apply to points on the boundary lines.
We can factor \(N\) using $\Phi$ (resp. $\Sigma$) with
\begin{itemize}
    \item
        Theorem~\ref{propcor:key} when \(\alpha_1 > 1/2\),
        so \((\alpha_1,\alpha_2)\) is in the diagonally shaded polygon;
    \item
        Theorem~\ref{thm52} with \(r = 2\)
        when 
        \(\alpha_2 \ge 2\alpha_3\),
	which translates as $2 \alpha_1 + 3 \alpha_2 \geq 2$,
        so
        $(\alpha_1, \alpha_2)$ is in the horizontally shaded polygon
        with vertices
        $(2/5, 2/5)$, $(1/2, 1/2)$, $(1/2, 1/3)$;
    \item
        Theorem~\ref{th:ACDL-practical}
        with \(r = 2\)
        when \(\alpha_2 > \overline{\alpha}_2\),
        so \((\alpha_1,\alpha_2)\)
        is in the tiny black triangle with vertices 
        $(\overline{\alpha}_2, \overline{\alpha}_2)$, 
        $(2/5, 2/5)$, 
        $(1-3 \overline{\alpha}_2/2 = 0.415, \overline{\alpha}_2)$.
\end{itemize}
The grey polygon
with vertices
$(1/3, 5/18)$, $(1/3, 1/3)$,
$(\overline{\alpha}_2, \overline{\alpha}_2)$,
$(1-3 \overline{\alpha}_2/2, \overline{\alpha}_2)$,
$(1/2, 1/3)$, and $(1/2, 1/6)$
is the zone where 
we cannot prove deterministic polynomial-time factorization.

A necessary condition to apply Theorem~\ref{th:ACDL-theoretical} in
our case for $r=2$ ($r=1$ being uninteresting)
is
\begin{align*}
    1-\alpha_2 & < (\alpha_1+\alpha_2)^2 \,, 
    \shortintertext{or}
    \alpha_2 & > f(\alpha_1) 
    := \frac{-(2 \alpha_1+1) + \sqrt{4 \alpha_1+5}}{2}
    \,.
\end{align*}
The function \(f\) is decreasing on $[0, 1]$ and is smaller than $\alpha_2$
for $\alpha_1 \ge (\sqrt{17}-1)/8 \approx 0.3904$; note that $f(1/2) =
(\sqrt{7}-1)/2 \approx 0.3229$. This is the dash-dotted line, which
corresponds to a sharp limit on using this theorem.

\iffalse 
\begin{figure}[hbt]
\begin{center}
\begin{tikzpicture}[xscale=10,yscale=8,>=latex]
\draw[->] (-0.25, 0) -- (1.25, 0); \node at (1.2, -0.1) {$\alpha_1$};
\draw[->] (0, -0.25) -- (0, 0.7); \node at (-0.075, 0.65) {$\alpha_2$};
\draw[fill,black!35] (0.5, 0.166) -- (0.333, 0.277) --
(0.333, 0.333) -- (0.4, 0.4) -- (0.5, 0.333) -- (0.5, 0.166);
\draw[dashed] (0, 0) -- (0.333, 0.333);
\draw[dashed] (0.333, 0.277) -- (0, 0.5);
\node at (-0.05, 0.277) {$5/18$};
\draw[dashed] (0, 0.277) -- (0.333, 0.277);
\draw[fill,white] (0.666, 0.0555) -- (0.666, 0.333)
-- (1, 0) -- (0.75, 0);
\draw[pattern=north east lines] (0.666, 0.0555) -- (0.666, 0.333)
-- (1, 0) -- (0.75, 0);
\node[anchor=north] (1/3) at (0.333, -0.15) {$1/3\;\;$};
\draw[dashed] (0.333, -0.15) -- (0.333, 0.333);
\node[anchor=north] (2/3) at (0.7, -0.15) {$2/3$};
\draw[dashed] (0.618, -0.24) -- (0.618, 0.088);
\node[anchor=north] (overalpha1) at (0.618, -0.25) {$\overline{\alpha}_1$};
\draw[dashed] (0.666, -0.15) -- (0.666, 0.0555); 
\draw[fill,pattern=dots] (0.618, 0.088) -- (0.618, 0.254) -- (0.666,
0.222) -- (0.666, 0.0555);
\draw[fill,pattern=crosshatch] (0.5, 0.166) -- (0.5, 0.333) -- (0.618,
0.254) -- (0.618, 0.088) -- (0.5, 0.166);
\node[inner sep=0pt] (1/2x0) at (0.5, 0) {$\times$};
\node[inner sep=0pt,anchor=north] (1/2x0lab) at (0.525, -0.05) {$1/2$};
\draw[dashed] (1/2x0) -- (0.5, 0.166);
\draw[fill,pattern=horizontal lines] (0.666, 0.222) -- (0.4, 0.4) -- (0.5, 0.5) --
(0.666, 0.333) -- (0.666, 0.222);
\node[inner sep=0pt] (2/5x0) at (0.4, 0) {$\times$};
\node[inner sep=0pt,anchor=north] (2/5x0lab) at (0.4, -0.05) {$2/5$};
\node[inner sep=0pt,anchor=north] (1x0lab) at (1, -0.05) {$1$};
\node[inner sep=0pt,anchor=east] (0,2/5) at (0, 0.4) {$2/5\;\;$};
\node[inner sep=0pt,anchor=east] (0,1/2) at (0, 0.5) {$1/2\;\;$};
\draw[dashed] (0, 0.5) -- (0.5, 0.5);
\node[inner sep=0pt] (3/4x0) at (0.75, 0) {$\times$};
\node[inner sep=0pt,anchor=north] (3/4x0lab) at (0.75, -0.05) {$\;\;3/4$};
\draw[-] (0.333, 0.277) -- (0.75, 0);
\draw[-] (0.333, 0.277) -- (0.333, 0.333);
\draw[dashed] (0, 0.4) -- (0.4, 0.4); \draw[dashed] (0.4,0) -- (0.4, 0.4);
\end{tikzpicture}
\caption{Cases covered by our results when $\omega(N) = 3$.\label{fig1}}
\end{center}
\end{figure}
\else 
\begin{figure}[hbt]
\begin{center}
\begin{tikzpicture}[xscale=8,yscale=8,>=latex]
\draw[->] (-0.2, 0) -- (1.15, 0); 
  \node[anchor=north] at (1.1, -0.02) {$\alpha_1$};
\draw[->] (0, -0.075) -- (0, 0.6); 
  \node at (-0.05, 0.57) {$\alpha_2$};
\draw[fill,black!35] (0.5, 0.166) -- (0.333, 0.277) --
(0.333, 0.333) -- (0.4, 0.4) -- (0.5, 0.333) -- (0.5, 0.166);
\draw[dashed] (0, 0) -- (0.333, 0.333);
\draw[dashed] (0.333, 0.277) -- (0, 0.5);
\node at (-0.05, 0.277) {$5/18$};
\draw[dashed] (0, 0.277) -- (0.333, 0.277);
\draw[pattern=north east lines] (1, 0) -- (0.75, 0) -- (0.5, 0.166) --
(0.5, 0.5);
\node[inner sep=0pt,anchor=north] (1/3) at (0.333, -0.02) {$1/3$};
\draw[dashed] (0.333, 0) -- (0.333, 0.333);
\draw (0.5, 0.5) -- (1, 0);
\node[inner sep=0pt] (1/2x0) at (0.5, 0) {}; 
\node[inner sep=0pt,anchor=north] (1/2x0lab) at (0.5, -0.02) {$1/2$};
\draw[dashed] (1/2x0) -- (0.5, 0.5);
\draw[fill,pattern=horizontal lines] (0.4, 0.4) -- (0.5, 0.5) --
(0.5, 0.333) -- (0.4, 0.4);
\node[inner sep=0pt] (2/5x0) at (0.4, 0) {}; 
\node[inner sep=0pt,anchor=north] (2/5x0lab) at (0.4, -0.02) {$2/5$};
\node[inner sep=0pt,anchor=north] (1x0lab) at (1, -0.02) {$1$};
\node[inner sep=0pt,anchor=east] (0,2/5) at (0, 0.4) {$2/5\;\;$};
\node[inner sep=0pt,anchor=east] (0,1/2) at (0, 0.5) {$1/2\;\;$};
\draw[dashed] (0, 0.5) -- (0.5, 0.5);
\node[inner sep=0pt] (3/4x0) at (0.75, 0) {}; 
\node[inner sep=0pt,anchor=north] (3/4x0lab) at (0.75, -0.02) {$3/4$};
\draw[-] (0.333, 0.277) -- (0.75, 0);
\draw[-] (0.333, 0.277) -- (0.333, 0.333);
\draw[dashed] (0, 0.4) -- (0.4, 0.4); \draw[dashed] (0.4,0) -- (0.4,
0.4);
\draw[fill,black] (0.4, 0.4) -- (0.415, 0.3904) -- (0.3904, 0.3904);
\draw[-] (0.4, 0.4) -- (0.415, 0.3904) -- (0.3904, 0.3904);
\draw[dash dot] (0.3904, 0.3904) -- (0.5, 0.3229);
\end{tikzpicture}
\caption{Cases covered by our results when $\omega(N) = 3$.\label{fig1}}
\end{center}
\end{figure}
\fi

\subsection{Numerical examples}

We use Algorithms~\ref{alg:ACDCF} (\ACDCF) and~\ref{alg:ACDL} (\ACDL) 
to factor various $N$ given \(\varphi(N)\) or \(\lambda(N)\).
The algorithms succeed when the divisors of $N$
satisfy the required properties.

We start with a numerical example for each sub-region in Figure~\ref{fig1}).

\begin{example}[\SplitCF with $\Phi$]
    Consider an attempt to factor
    \begin{align*}
        N & = 14300000000000000000000000045617
        \intertext{
            using \SplitCF.  The oracle \(\Phi\) tells us that
        }
        \varphi(N) & = 12000000000000000000000000038160 \,.
        \intertext{
            Applying \ACDCF with \(A = \varphi(N)\) and \(B = N\)
            reveals that $N$ has a divisor
        }
        D & = 100000000000000000000000000319 \,,
    \end{align*}
    which turns out to be prime; 
    the cofactor is $143 = 13\cdot11$.
    In this case, $\alpha_1 = 0.93082\ldots > 1/2$.
\end{example}
\begin{example}
    Let us factor
    \begin{align*}
        N & = 215453441884154813899608536725827949716396214692299863
        \intertext{
            using \SplitCF again.  The oracle \(\Phi\) tells us that
        }
        \varphi(N) & = 215453439729720123839763043257007943520875035231793568 \,.
        \intertext{
            Applying \ACDCF with \(A = \varphi(N)\) and \(B = N\)
            reveals that $N$ has a divisor
        }
        D & = 2154434690059745273387380265425792937208898747 \,,
    \end{align*}
which has two prime factors. In this case $(\alpha_1, \alpha_2,
    \alpha_3) = (0.45, 0.4, 0.15)$, a point in the horizontally shaded
    part.
\end{example}
\begin{example}
    Let us factor
    \begin{align*}
        N &= 14300000027170000072930000138567
        \intertext{
            with \SplitCF.  The oracle \(\Phi\) gives
        }
        \varphi(N) &= 12000000021600000060000000108000 \,,
        \intertext{
            and then \ACDCF with \((A,B) = (\varphi(N),N)\)
            finds a divisor 
        }
        D &= 100000000190000000510000000969
    \end{align*}
    with two prime factors $10000000019$ and $10000000000000000051$, 
    and the cofactor $N/D = 143$. We have $(\alpha_1, \alpha_2,
    \alpha_3, \alpha_4) = (0.610, 0.321, 0.0358, 0.033)$ and
    Inequality~\eqref{eq:funky-condition} is satisfied for $r = 2$.
\end{example}

\begin{example}[\SplitLLL with $\Phi$]
    Let us factor
    \begin{align*}
        N &= 5872731058374808693660010068837
        \intertext{
            with \SplitLLL.  The oracle \(\Phi\) gives
        }
        \varphi(N) &= 5872725180353869744164863505600 \,,
        \intertext{
            and then \ACDL with \((A,B) = (\varphi(N),N)\)
            finds a divisor 
        }
        D &= 5878015394214207265992137\,, \\
        x &= 5544735287880571100\,,
    \end{align*}
and $D = 1425101895589\cdot 4124628149333$, the third factor being 
$999101$. We have $(\alpha_1, \alpha_2,
    \alpha_3) = (0.410, 0.395, 0.195)$ which is in the tiny triangle.
\end{example}

\begin{example}[Factoring with $\Sigma$]
    Let us factor
    \begin{align*}
        N 
        &= 
        2682776312933147882428349713219285333356964534315603933540
        \\
        & \quad \ 90095217359233
        \intertext{using \SplitLLL.  The oracle $\Sigma$ tells us that}
        \sigma(N)
        &=
        2682803140502033115557437331732180804000067423327043796969
        \\
        & \quad \ 97748284763200
        \,.
        \intertext{
            Trying \(r = 2\),
            and calling \ACDL
            with 
            \(
                (A,B)
                =
                (\sigma(N)-N, N)
            \)
            and
            \(
                (\alpha,\beta)
                = 
                (2\overline{\alpha}_2, 1-\overline{\alpha}_2)
            \)
            (implying lattice parameters $(h,u)=(25,20)$),
            we find a solution
        }
      D &= 610540229658532834519888426420070208770724882201228981991
        \,,
        \\
        x &= -23576265633281739760211511675892594424044680
        \,.
    \end{align*}
\end{example}

\begin{example}[Factoring with $\Lambda$]
    We apply \SplitCF to
    \begin{align*}
        N &= 14300000027170000072930000138567
        \,,
        \intertext{
            for which the oracle \(\Lambda\)
            tells us that
        }
        \lambda(N) &= 100000000180000000500000000900
        \,.
        \intertext{
            \ACDCF reveals a divisor
        }
        D &= 100000000190000000510000000969
    \end{align*}
    with two factors, 
    $10000000019$ and $10000000000000000051$ 
    (which we find recursively using \FactorizationWithPhiTwo),
    and a cofactor $N/D = 143 = 11\cdot 13$.
    We see that $\lambda(N)/\varphi(N)=1/120$ (so 
    $\lambda(N)$ is close to $\varphi(N)$, and the method may work), and 
    \[
        (\alpha_1,\alpha_2,\alpha_3,\alpha_4)
        =
        ( 0.60984, 0.32097, 0.035754, 0.033425 )
        \,.
    \]
\end{example}

\section{
    Other oracles
}
\label{sec:other}

Before concluding, we briefly survey some logical extensions to other
oracles that do not yield useful results.

\subsection{Using the factorization of \texorpdfstring{$\varphi(N)$}{phi(N)} or \texorpdfstring{$\sigma(N)$}{sigma(N)}}
\label{ssct:factphi}

Every odd prime $p\mid N$ is necessarily of the form $\delta+1$ 
for some even $\delta\mid \varphi(N)$, so we can compute all prime
factors $p$ of $N$ from the factors of $\varphi(N)$. Unfortunately
this does not lead to polynomial-time algorithms, since the number
of divisors of $\varphi(N)$ can be large, as shown in ~\cite{LuPo07},
and the same should hold for $\lambda(N)$ as well.

If $\omega(N)=k$ (for squarefree $N$, say), then the smallest prime factor
of $N$ has $p_k < N^{1/k}$, so we might content ourselves with
finding $p_k$ by
enumerating divisors of $\varphi(N)$ less than
$\varphi(N)^{\alpha} = N^{1/k}$. But this is not enough
to get polynomial time, since this number can be lower bounded by $C
d(\varphi(N))^{-C' \alpha \log \alpha}$ for positive constants $C$ and $C'$
(see~\cite{Wolke72}, studying a function introduced by P.~Erd\H{o}s in \cite{Erdos52}).

We anticipate the same properties for $\sigma(N)$.

\subsection{Factoring with the order oracle}
\label{sct-order}

We now consider factoring using the order oracle \(\ORDER\), whose
quantum counterpart is the core of Shor's algorithm. As explained in
\cite{GrLaSmMo16}, when $\lambda(N)$ has very few divisors, having the
order of an element is enough to factor $N$.
It is doubtful that we can find an algorithm for all integers, since
$\lambda(N)$ may have a lot of divisors that cannot help factoring $N$.

Suppose we have the factorization of the order.
As in~\S\ref{ssct:factphi},
we might consider a modified \(\ORDER\)
that yields not only the order \(r\) of \(a\) modulo \(N\),
but also the factorization of \(r\).
Algorithm~\ref{algo_fact_ord}
shows a straightforward way to make use of this additional information.
If $N$ is not squarefree, 
then it is possible that $\gcd(r, N) \not= 1$,
which gives us an easy factor of \(N\)
(hence the check in Line~\ref{gcdrN}).
Algorithm \ref{algo_fact_ord} fails, returning $\emptyset$,
if $a$ has order $r$ modulo every prime factor $p_i$ of~$N$, 
or if $r \mid p_i-1$ for all $i$, which implies that all
divisors of $N$ are congruent to \(1\pmod{r}\).
Then, if $r > N^{1/4+\varepsilon}$, we can conclude in deterministic
polynomial time using Theorem~\ref{theorem:co-ho-na}.
Another approach for large $r$ is given in~\cite{Zralek10}.

\subsection{Combining different oracles}
\label{sct-combining}

In another direction, having $\varphi(N)$ \emph{and} $\sigma(N)$ yields
the factorization of squarefree $N$ with three factors by finding the
integer roots of the polynomial $(X-p_1)(X-p_2)(X-p_3) = X^3 +
(N-(\sigma(N)+\varphi(N))/2) X^2 + ((\sigma(N)-\varphi(N))/2-1) X -
N$, extending the result of \FactorizationWithPhiTwo.

\section{Conclusions}

We have shown a range of partial results concerning the relationships between
several elementary number theoretic functions
and the integer factorization problem.
In each case, we have used ideas coming from lattice reduction to improve
what was known, while falling short of the goal of completely proving the
sufficiency of these oracles for efficiently factoring all numbers. 

As we saw in~\S\ref{sec:other},
adding more information does not pay: The complete factorizations of
oracle values (or given $\varphi(N)$ {\em and} $\lambda(N)$, or even
given their prime factorizations) \emph{still} does not help factoring
all $N$. 
These results may be surprising, but they show the fundamental
difficulty of factoring.

\subsection*{Acknowledgements.} We thank J.~Shallit
for sending us a copy of \cite{Long81}, and B.~{\'Z}ra\l{}ek for
sending us a copy of his work~\cite{Zralek19}. We are grateful to
W.~George for bringing \cite{Chow15} to our attention. J.-L.~Nicolas
and G.~Tenenbaum were kind enough to send us results related to
Wolke's work. All algorithms
were programmed and tested in \textsc{Magma}, and some computations were
done in \textsc{Maple}.

\iffalse
\bibliographystyle{plain}
\bibliography{oracles}
\else
\def\noopsort#1{}\ifx\bibfrench\undefined\def\biling#1#2{#1}\else\def\biling#1#2{#2}\fi\def\Inpreparation{\biling{In
  preparation}{en
  pr{\'e}paration}}\def\Preprint{\biling{Preprint}{pr{\'e}version}}\def\Draft{\biling{Draft}{Manuscrit}}\def\Toappear{\biling{To
  appear}{\`A para\^\i tre}}\def\Inpress{\biling{In press}{Sous
  presse}}\def\Seealso{\biling{See also}{Voir
  {\'e}galement}}\def\Editor{\biling{Ed.}{R{\'e}d.}}

\fi

\newpage
\appendix
\section{Splitting and factoring integers}
\label{sec:appendix}


In this appendix, we give several algorithms to split an integer
(i.e., finding a non-trivial divisor) or to factor it completely,
summarising our work.
\begin{itemize}
    \item \FactorWithEvenPower (Alg.~\ref{algo:ord-factor}) is a
        primitive for factoring using an element of even order;
    \item \SplitWithOracleRandom (Alg.~\ref{algo:sq-oracle-rnd}) is
        a randomized version of factoring with oracles, together with
        \FactorWithOracleRandom (Alg.~\ref{algo:fact-oracle}). A version
        for $\Sigma$ can be found in~\cite{BaMiSh86}.
    \item \FactorWithOracle is the main function for factoring using
        an oracle (Alg. \ref{algo:sq-oracle}), and its ancillary function
        \FWO (Alg.~\ref{algo:FWO});
    \item \FactorWithFactoredOrder (Alg.~\ref{algo_fact_ord}) 
        uses the factorization order oracle.
\end{itemize}

\begin{algorithm}[hbt]
    \caption{Factoring with an even power}
    \label{algo:ord-factor}
    \SetKwInOut{Input}{input}\SetKwInOut{Output}{output}
    \Function{\FactorWithEvenPower{$N$, $a$, $k$}}{
        \Input{Integers \((N,a,k)\) with $k$ even, \(\gcd(a,N) = 1\),
        and $a^k\equiv 1\bmod N$}
    \Output{a non-trivial divisor $1 < d < N$ or {\tt failure}}
    \BlankLine
    Compute \((s,t)\) such that \(k = 2^s\cdot t\) with $s > 0$, $t$ odd\;
    \tcp{by hypothesis, $a^{2^s\cdot t} \equiv 1\bmod N$}
        \(b \gets a^t\bmod N\) \;
	\If{\(b \neq 1\)}{
	    find the smallest $s'$, $1 \leq s' \leq s$ such that $b^{2^{s'}}
        \equiv 1\bmod N$\;
        \(c \gets b^{2^{s'-1}}\bmod N\)
	    \tcp*{$c$ is a square root of 1}
	\If{$c \neq -1$}{
        \Return{$\gcd(c-1, N)$}
	}
	}
    \Return {\tt failure}\;
}
\end{algorithm}
If $k = \ord_N(a)$, then surely, we cannot have $b = 1$.
When $k = \varphi(N)$ (resp. $\lambda(N)$), the probability that $b = 1$ 
is bounded by $1/2^s \leq 1/2^{\omega(N)}$. There are
$2^{\omega(N)}$ square roots of $1$ (by the Chinese Remainder Theorem), including
$\pm 1$. There are $2^{\omega(N)}-1$ possible values for $c$ and only
one is trivial. So $c \neq -1$ with probability $\geq
1-1/2^{\omega(N)-1} \geq 1/2$.

If we use the order oracle \(\ORDER\),
then we may need to try several random values of~$a$ until 
we find one with even order~\(r\). And again this happens with
probability $\leq 1/2^s$.

\begin{algorithm}[hbt]
    \caption{Splitting an integer with an oracle using randomness}
    \label{algo:sq-oracle-rnd}
    \SetKwInOut{Input}{input}\SetKwInOut{Output}{output}
    \Function{{SplitWithOracleRandom}($N$, $\varpi$)}{
    \Input{An integer \(N\), an oracle $\varpi \in \{\Phi, \Lambda, \ORDER\}$}
    \Output{$N$ if $N$ is prime, otherwise a non-trivial divisor $1 < d < N$}
    \BlankLine
    \If{$\varpi \in \{\Phi, \Lambda\}$}{
        $k \stackrel{\varpi}{\longleftarrow} \mathcal{F}(\varpi)(N)$\;
        \If(\tcp*[f]{$N$ is prime}){$k = N-1$}{
	        \Return $N$\;
        }
    }
    \While{true}{
        choose a random \(a \in [2, N-2]\) \;
        \(g \gets \gcd(a,N)\) \;
        \If{\(g \not=1\)}{
            \Return{\(g\)} \;
        }
	\If{$\varpi = \ORDER$}{
	    $k \stackrel{\varpi}{\longleftarrow} \ord_N(a)$\;
        \If(\tcp*[f]{$N$ is prime}){$k = N-1$}{
            \Return $N$\;
        }
	}
	\tcp{by hypothesis, $a^k \equiv 1\bmod N$}
	\If{$k$ is even}{
		$res\gets$ \FactorWithEvenPower{$N$, $a$, $k$}\;
		\If{$res \neq $ {\tt failure}}{
			  \Return{$res$}\;
		}
	}
    }
    }
\end{algorithm}

\begin{algorithm}[hbt]
    \caption{Factoring an integer with an oracle and randomness}
    \label{algo:fact-oracle}
    \SetKwInOut{Input}{input}\SetKwInOut{Output}{output}
    \Function{\FactorWithOracleRandom{$N$, $\varpi$}}{
    \Input{Integer \(N\), oracle $\varpi \in \{\Phi, \Lambda, \ORDER\}$}
    \Output{A set $\{(p_1, e_1), \ldots, (p_r, e_r)\}$
    s.t. $N = \prod_{i=1}^r p_i^{e_i}$ with all \(p_i\) prime}
    \BlankLine
    $d \gets$ \SplitWithOracleRandom{$N$, $\varpi$} \;
    \If(\tcp*[f]{$N$ is prime}){$d = N$}{
	    \Return $\{(N, 1)\}$\;
    }
    \Else{
        $\mathcal{L}_0 \gets$ \Refine{$\{d, N/d\}$} \;
	    \tcp{$\mathcal{L}_0 = \{(M_1, e_1), \ldots, (M_s, e_s)\},
            \gcd(M_i, M_j)=1 \text{ for } i \neq j$}
	    $\mathcal{L} \gets \emptyset$
        \;
	    \For{$(M, e) \in \mathcal{L}_0$}{
	        $\mathcal{L}_1 \gets$ \FactorWithOracle{$M$, $\varpi$} \;
            \tcp{$\mathcal{L}_1 = \{(p_1, f_1), \ldots, (p_u, f_u)\}$,
                $p_i$ prime}
            \For(\tcp*[f]{since all $M_i$ are coprime, $p$ is not in $\mathcal{L}$}){$(p, f) \in \mathcal{L}_1$}{
	            $\mathcal{L} \gets \mathcal{L} \cup \{(p, e f)\}$\;
	        }
        }
	    \Return{$\mathcal{L}$}\;
    }
}
\end{algorithm}

\begin{algorithm}[hbt]
    \caption{Factoring an integer with an oracle}
    \label{algo:sq-oracle}
    \SetKwInOut{Input}{input}\SetKwInOut{Output}{output}
    \Function{\FactorWithOracle{$N$, $\varpi$}}{
    \Input{A squarefree integer \(N\), an oracle $\varpi \in \{\Phi, \Sigma\}$}
    \Output{Sets $\mathcal{P}$ and $\mathcal{C}$ (possibly empty)
    containing prime and composite divisors of $N$, respectively}
    \BlankLine
	\uIf{$\varpi = \Phi$}{
		$sgn \gets -1$\;
	}
	\Else{
		$sgn \gets +1$\;
	}
    $M \stackrel{\varpi}{\longleftarrow} \mathcal{F}(\varpi)(N)$\;
    \If(\tcp*[f]{$N$ is prime}){$M = N+sgn$}{
       \Return $(\{N\}, \emptyset)$\;
    }
    $\mathcal{P} \gets$ \FactorizationWithPhiTwo{$N$} \;
    \If{$\mathcal{P} \neq \emptyset$}{
        \Return{$(\mathcal{P}, \emptyset)$}\;
    }
    $g \gets \gcd(N, M)$\;
    \If{$g \neq 1$}{
        \Return{\FWO{$N$, $\{g\}$, $\varpi$}} \;
    }
    $\mathcal{D} \gets$ \FactoringWithKnownDifference{$N$, $M$, $1$, $0.5$} \;
    \If(\tcp*[f]{we have found some $D > N_1^{1/2}$}){$\mathcal{D} \neq \emptyset$}{
        \Return{\FWO{$N$, $\mathcal{D}$, $\varpi$}} \;
    }
    \tcp{Theorem \ref{thm52} can be used?}
    $\mathcal{D} \gets$ \SplitCF{$N$, $\varpi$} \;
    \If{$\mathcal{D} \neq \emptyset$}{
        \Return{\FWO{$N$, $\mathcal{D}$, $\varpi$}} \;
    }
    $\mathcal{D} \gets$ \SplitLLL{$N$, $\varpi$, $\lfloor \log_2 N\rfloor$} \;
    \If{$\mathcal{D} \neq \emptyset$}{
        \Return{\FWO{$N$, $\mathcal{D}$, $\varpi$}} \;
    }
    \Return{$(\emptyset, \{N\})$}\;
}
\end{algorithm}

\begin{algorithm}[hbt]
    \caption{Ancillary function for Algorithm~\ref{algo:sq-oracle}}
    \label{algo:FWO}
    \SetKwInOut{Input}{input}\SetKwInOut{Output}{output}
    \Function{\FWO{$N$, $\mathcal{D}$, $\varpi$}}{
    \Input{Squarefree integer \(N\), an oracle $\varpi \in \{\Phi,
    \Sigma\}$, and a set $\mathcal{D}$ of non-trivial divisors of $N$}
    \Output{Two sets (possibly empty) $\mathcal{P}$ and $\mathcal{C}$
    where the former (resp. the latter) contains prime
    (resp. composite) divisors of $N$}
    \BlankLine
    $\mathcal{D} \gets \cup_{d \in \mathcal{D}} \{d, N/d\}$
    \tcp*{force $d$ and $N/d$ to be in the set}
    $\mathcal{D} \gets$ \CleanDivisors{$\mathcal{D}$}
    \;
    $(\mathcal{P},\mathcal{C}) \gets (\emptyset,\emptyset)$
    \;
    \For{$d \in \mathcal{D}$}{
    	$(\mathcal{P}_d, \mathcal{C}_d) \gets$ \FactorWithOracle{$d$, $\varpi$} \;
        $
            (\mathcal{P},\mathcal{C}) 
            \gets
            (\mathcal{P} \cup \mathcal{P}_d, \mathcal{C} \cup \mathcal{C}_d)
        $\;
    }
    \Return{$(\mathcal{P}, \mathcal{C})$}\;
}
\end{algorithm}

\begin{algorithm}[hbt]
    \caption{Factoring with factorization of order}\label{algo_fact_ord}
    \Function{\FactorWithFactoredOrder{$N$, $a$}}{
        \Input{$N$, $a$}
        \Output{A set of pairs \((M_i,e_i)\) with the \(M_i\) pairwise
        coprime and \(\prod_i M_i^{e_i} = N\)}
        \BlankLine
        \(
            \{(\ell_1,e_1),\ldots,(\ell_u,e_u)\}
	        \gets 
            \Factorization{$\ORDER(a,N)$}
        \)
        \;
        \(r \gets \Pi_{i=1}^u\ell_i^{e_i}\)
        \;
        \If(\tcp*[f]{$N$ is prime}){$r = N-1$}{
	        \Return{$\{(N, 1)\}$}\;
	    }
        $g \gets \gcd(r, N)$
        \;
        \If{$g \ne 1$}{ \label{gcdrN}
            $\mathcal{L} \gets \emptyset$
            \;
            \For{$i\gets 1$ \KwTo $u$}{
                $v \gets \nu_{\ell_i}(N)$ 
                \tcp*{maximal power of $\ell_i$ dividing $N$}
                \If{$v > 0$}{
                    $\mathcal{L} \gets \mathcal{L} \cup \{(\ell_i, v)\}$
                    \;
                    $N \gets N/\ell_i^v$
                    \;
                }
            }
            \tcp{$N > 1$ since $r < N$}
            \Return{$\mathcal{L} \cup {}$\FactorWithFactoredOrder{$N$, $a$}}
        }
        \(\mathcal{M} \gets \emptyset\)
        \;
        \For{$i \gets 1$ \KwTo $u$}{
            $b \gets a^{r/\ell_i} \bmod N$
            \;
            $g \gets \gcd(b-1, N)$
            \;
            \If{$1 < g < N$}{
                $\mathcal{M} \gets \mathcal{M} \cup \{g, N/g\}$
            }
        }
        \Return{\CleanDivisors{$N$, $\mathcal{M}$}} 
        \;
    }
\end{algorithm}

\end{document}